\documentclass[11pt]{amsart}

\usepackage{
amsfonts,
latexsym,
amssymb,
enumerate,
verbatim,
mathrsfs,
color,
}

\definecolor{bluuu}{RGB}{0,0,50}

\newcommand{\labbel}[1]{\label{#1} [[{\bf #1}]]}  
\newcommand{\bibbitem}[1]{\bibitem{#1} [[{\bf #1}]]}  
\renewcommand{\labbel}{\label} \renewcommand{\bibbitem}{\bibitem}  

\newcommand{\arxiv}[1]{{\color{bluuu}#1}}

\numberwithin{equation}{section}

\newtheorem{theorem}{Theorem}[section]
\newtheorem{lemma}[theorem]{Lemma}

\newtheorem{proposition}[theorem]{Proposition} 
 
\newtheorem{corollary}[theorem]{Corollary}

\newtheorem*{claim*}{Claim}

\newtheorem*{theorem*}{Theorem}
\newtheorem*{proposition*}{Proposition}
\newtheorem*{corollary*}{Corollary}
\newtheorem*{lemma*}{Lemma}
\newtheorem*{scholion*}{Scholion}

\theoremstyle{definition}
\newtheorem{definition}[theorem]{Definition}

\newtheorem{problem}[theorem]{Problem} 
\newtheorem{problems}[theorem]{Problems} 

\theoremstyle{remark}
\newtheorem{remark}[theorem]{Remark}

\newtheorem*{remark*}{Remark}
\newtheorem*{remarks*}{Remarks}
 
\newtheorem{example}[theorem]{Example}
\newtheorem{examples}[theorem]{Examples}

\newtheorem*{observation*}{Observation}

\newcommand{\ceq}{\mathrel{\stackrel{\scriptstyle  c}{=}}}

\newcommand{\eg}{\mathrel{\scriptstyle \stackrel{  g}{\equiv}}}

\newcommand{\eqi}{\mathrel{\scriptstyle \stackrel{ i}{\equiv}}}
\newcommand{\leqi}{\mathrel{\scriptscriptstyle \stackrel{  i}{\leq}}}

\newcommand{\eqa}{\mathrel{\scriptstyle \stackrel{\scriptstyle  a}{\equiv}}}
\newcommand{\leqa}{\mathrel{ \scriptscriptstyle  \stackrel{ a}{ \leq}}}

\begin{document}
 
\title[Ring structure on combinatorial games]
{A Ring structure on the Class of Combinatorial Games}

\author{Harry Altman}

\author{Paolo Lipparini} 
\urladdr{http://www.mat.uniroma2.it/~ lipparin}
\address{Dipartimento Giocoso di Matematica\\Viale della  Ricerca
 Scientifica\\Universit\`a di Roma ``Tor Vergata'' 
\\I-00133 ROME ITALY (currently retired)\\
ORCID 0000-0003-3747-6611}

\keywords{Combinatorial game; Conway product; iteratively second winner game;
commutative Ring; sets as games}

\subjclass[2020]{91A46; 13F70; 00A30}
\thanks{Work performed under the auspices of G.N.S.A.G.A. Work 
 supported by PRIN 2012 ``Logica, Modelli e Insiemi''.
Paolo Lipparini acknowledges the MIUR Department Project awarded to the
Department of Mathematics, University of Rome Tor Vergata, CUP
E83C18000100006.}

\begin{abstract}
J.\ Conway defined useful operations on the Class
of combinatorial games
and  also introduced a  notion of equivalence between games.
Conway showed that, under his equivalence, games form a Group.
However, Conway product  is not  well defined
on equivalence classes of arbitrary games
(though it is well defined for surreals).

We consider an equivalence relation
finer than Conway's and show that
under such a relation combinatorial games actually form a
Ring. We hint to other possible relations  on the Class of combinatorial games.
\end{abstract} 

\maketitle

\section{Introduction} \labbel{intron} 

Conway surreal numbers form a Field which contains
both the real numbers and the Ordinals,
and thus has a very rich structure. 
Combinatorial games,
earlier   studied in particular
cases by various authors,
have an even richer structure.
Conway defined a natural sum operation
on combinatorial games, and then also a product.
In order to get a Group structure, one
should work with respect to some equivalence relation,
since, say, $G + (-G)$ is a game more complex than $G$,
hence it is not isomorphic to $0$, unless $G=0$.   
Under a naturally defined equivalence relation,
also introduced by Conway, ``numbers'' do form
a Field, the mentioned Field of surreals.
However, for arbitrary games
which are not  ``numbers'', product is not well defined up to 
Conway equivalence.

User Gro-Tsen from mathoverflow.net \cite{GT} 
proposed a finer equivalence relation
(see Definition \ref{grotsen} below) and
asked whether it leads to an interesting theory.
We will provide partial affirmative answers
to Gro-Tsen's question in Section \ref{Gro-Tsen}, 
but here we mainly deal with a 
possibly distinct equivalence relation which 
makes the class of all Games a commutative Ring.
Recall that a game is Conway zero if the Second player
has a winning strategy. Here we study a finer relation
of being \emph{iteratively zero}, in the sense that the Second player
has a winning strategy under the following way of playing:
First and Second alternate in moving, as usual,
but, at each move made by her, the First player 
may choose to impersonate Left or Right, namely,
First, at her turn, is allowed each time to change her role.
On the other hand, Second is not allowed to choose his role:
each time First moves as Left, Second must reply as Right, and symmetrically.

Two games $G$ and $H$ 
are \emph{iteratively equivalent}, written as $\eqi$,
if $G- H$ is iteratively zero. Under this equivalence
relation, the Class of combinatorial games becomes a Ring $\mathbf{PG} / {\eqi}$
(Theorem \ref{ringth}), the main point being
that the product is well defined, a fact which follows
from the result that the product of an iteratively zero game
 with an arbitrary game 
 is still iteratively zero (Theorem \ref{thmi}).
Surreal numbers can be obtained as a Quotient of some
Substructure of $\mathbf{PG} / {\eqi}$ (Proposition  \ref{inumbsur});
moreover, the Universe of sets, thought of as
the Class of  Right-subposition-free games,
can be embedded into $\mathbf{PG} / {\eqi}$ (Proposition \ref{propsets}).
We also hint to other possible equivalence relations; in particular,
we show that, for any given set of operations on $\mathbf{PG} $, there is the coarsest
equivalence relation which respects each operation and at the same
time preserves outcomes (Proposition \ref{equiSex}).

In conclusion, the Class $\mathbf{PG} $
with the Conway operations, possibly with further operations
and possibly up to some comparatively fine equivalence relation,
seems to have an unexpected and  surprisingly rich structure 
which surely deserves further study.

We list a few problems,
the most relevant one seems to be whether
Gro-Tsen's relation is the same as $\eqi$.

\section{Preliminaries} \labbel{preln}

We assume that the reader is familiar with the basic notions of 
combinatorial game theory, e.~g., \cite{C,S}.
Here we just recall some basic definitions and fix
notation.

A \emph{combinatorial game} is (a position in)
 a possibly infinite
two-player  game with perfect information, 
no chance element, no draw
and no infinite run, as studied, for example, in \cite[Chapter VIII]{S}.
Games are possibly \emph{partizan}
and we always assume \emph{normal} (not mis\'ere) \emph{play}.  
See Conway \cite{C} and Siegel \cite{S} for full details.  
We generally follow the notation from \cite{S}; all
variations will be explicitly mentioned.
The class of all partisan combinatorial games is denoted 
by $\mathbf{PG}$.
As usual, the \emph{sum} of two games  is intended in the disjunctive
sense: moving on $G+H$ means  moving in exactly one component, either
$G$ or $H$. The two players are called Left and Right;
games are frequently denoted by a two-sided
set-theoretical-like notation, for example, $\{ \, 0, 1 \mid 2 \,\}$
is the game in which the Left player has two options $0$ and $1$,
while the Right player has the only option $2$. Recall that $0$
is the game $\{ \,  \mid  \,\}$ in which no player has any option,
$1$ is $\{ \, 0 \mid  \,\}$ and $2$ is $\{ \, 1 \mid  \,\}$.    

We use letters like $G$, $H$ for \emph{forms} (not values!) 
of games, that is, we consider games
up to
isomorphism;
 $\cong$ denotes isomorphism of games,
up to extensional equivalence\footnote{ Isomorphic 
games are called \emph{identical} in \cite{C},
but we will use a different terminology, as we are going
to explain soon.}.   
This means that, for example,  $ 2= \{ \, 1 \mid  \,\}$
and $1+1$ are isomorphic games, since they 
have isomorphic options. Formally, they are not
identical, since the option $1$ appears two times in 
$1+1$, while it appears only one time in $2$.    
 The tree associated \cite[p. 60]{S}   to $1+1$ has two Left edges  
exiting form the root, while only one edge exits from 
the root of $2$. 

In many texts, games are considered up to \emph{Conway equivalence},
which is commonly denoted by $=$.
Roughly, two games are Conway equivalent if
they give the same outcome, even when some fixed game
is added to both of them.
Here we will introduce
 more equivalence relations; moreover,
at the start, we are considering games just 
up to the much finer relation of
isomorphism, hence, in order to avoid 
notational confusion, we use $=$ for identity of games,
and we will denote Conway equivalence by $\ceq$. 

We say that some  relation $\equiv_1$
is \emph{finer} than   $\equiv_2$ if they have
the same domain and $a \equiv_1 b$
implies $a \equiv_2 b$, for all $a,b$ in the domain.
Under the same condition, we will say that 
$\equiv_2$
is \emph{coarser} than   $\equiv_1$.
Note that some authors use the above terms
with a reversed meaning.

If $G$ is a game, $-G$ is the game
in which all Left options are recursively exchanged with all
Right options.   
As usual, we write
$G - H$ for $G + (- H)$.
If $G$ and $H$ are games, their Conway product
$G H$ has Left options of the form
$G^LH + GH^L-G^LH^L$ and $G^RH + GH^R-G^RH^R$
and Right options of the form
$G^LH + GH^R-G^LH^R$ or $G^RH + GH^L-G^RH^L$,
where $G^L$ denotes a generic Left option of $G$,
$G^R$ denotes a generic Right option of $G$ and similarly for $H$.
Conway product will be denoted by juxtaposition
and, in some cases, by $\cdot$ for clarity. 

\begin{proposition} \labbel{basicn}
For all $G, H, K \in \mathbf{PG}$, the following 
statements and identities hold.
  \begin{enumerate}[({a}1)]    
\item
$\cong$ is an equivalence relation on  $\mathbf{PG}$
 and  $\cong$ is coarser than identity of games.
\item
If $G \cong H$ and $K \cong L$,
then $G+K \cong H + L$, $-G \cong -H$
and $G K \cong H L$.    
\item
$G+0 = G$ and $-0 = 0$, 
\item
$G+H = H+ G$,
\item
$(G+H) + K =  G + (H+K)$,
\item
$-(G+H) = (-G) + (-H)$,
\item
$-(-G) = G$,  
\item
$G0= 0$,
\item
$G1= G$,
\item
$GH= HG$,
\item
$(-G)H=-GH$.  
 \end{enumerate}
 \end{proposition}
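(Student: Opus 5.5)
The plan is to prove every item by simultaneous induction on the birthdays (well-founded game trees) of the games involved. Throughout, ``$=$'' is identity of forms up to extensional equivalence, and $\cong$ is identity up to isomorphism. For each identity we unfold the recursive definition of the operations. By the inductive hypothesis applied to options, the sets of Left options of the two sides coincide, and likewise for Right options. Hence the two games coincide extensionally.

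Item (a1) is immediate: isomorphism of trees is reflexive, symmetric and transitive, and identical games are trivially isomorphic. For (a2) we induct on the sum of the ranks. An isomorphism between $G$ and $H$ matches each option $G^L$ with some $H^L\cong G^L$, and similarly for the other options. Options of $G+K$ are $G^L+K$ or $G+K^L$, and these are matched with $H^L+L$ or $H+L^L$, which are isomorphic by induction. The cases of $-G$ and of $GK$ are handled the same way; for $GK$ we use the already established compatibility of $+$ and $-$ with $\cong$ on the components $G^LK+GK^L-G^LK^L$.

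Items (a3)--(a7) are the standard group-like identities for disjunctive sum and negation, again by induction. Two examples:
\begin{itemize}
\item For (a3), $0$ has no options, so the options of $G+0$ are exactly $G^L+0=G^L$ and $G^R+0=G^R$.
\item For (a5), an option of $(G+H)+K$ is $(G^L+H)+K$, $(G+H^L)+K$ or $(G+H)+K^L$. By induction these are the corresponding options of $G+(H+K)$.
\end{itemize}
Items (a6) and (a7) are analogous, using that negation swaps the roles of Left and Right.

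For the product items:
\begin{itemize}
\item (a8) is immediate, since every option of $G0$ requires an option of $0$, so $G0$ has no options.
\item For (a9), $1$ has only the Left option $0$. The Left options of $G1$ are $G^L1+G0-G^L0$, which by induction, (a8) and (a3) equal $G^L$. The Right options are $G^R1+G0-G^R0=G^R$.
\item For (a10), the defining formulas are symmetric: $G^LH+GH^L-G^LH^L$ turns into $H^LG+HG^L-H^LG^L$ by induction and commutativity (a4).
\item For (a11), the Left options of $(-G)H$ are $(-G)^LH+(-G)H^L-(-G)^LH^L$ with $(-G)^L=-G^R$, and symmetrically for the other three kinds. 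By induction and (a6), (a7) these become $-(G^RH+GH^L-G^RH^L)$, which is exactly the negation of a Right option of $GH$. Doing the same for all four kinds of options shows that $(-G)H=-(GH)$.
\end{itemize}

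The main point requiring care is that the product identities use genuine equality of forms, not merely $\cong$. So at each step we must check that the inductive hypotheses give literally equal option sets up to extensional equivalence. This holds because the statements for sums and negation are exact, and not Conway equivalences.
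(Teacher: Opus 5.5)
Your proof is correct and is essentially the paper's argument: the paper simply invokes Conway's inductive ``one-line proofs'', which unfold the definitions option by option exactly as you do. One bookkeeping remark: in the paper $=$ is the \emph{finer} relation (identity of game trees, so $2\neq 1+1$) and $\cong$ is extensional equivalence, so instead of concluding that the option \emph{sets} coincide, you should note that each of your option correspondences is a bijection (e.g.\ $(G^L,H^L)\leftrightarrow(H^L,G^L)$ in (a10)), which holds in every case you treat.
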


 \begin{proof} 
Straightforward by ``$1$-line proofs'', compare \cite[p. 17]{C}.  
 See also  \cite[Theorems 3, 4 and 7 in Ch.\ 1]{C} or 
\cite[Propositions 1.5 (b) and 2.2 (a) - (c) in Ch.\ VIII]{S}. 
\end{proof}

\section{Iteratively zero games} \labbel{itz}

\begin{definition} \labbel{iter}
We say that a  game $G$ is \emph{iteratively zero},
 or \emph{iteratively second winner}
if the Second player has a winning strategy such that he
always chooses iteratively zero options.
In detail, $G$ is iteratively zero if, 
for every Left option  $G^L$ of $G$,
there is a Right option $G^{LR}$ of $G^L$
such that $G^{LR}$ is iteratively zero and,
symmetrically, 
for every Right option  $G^R$ of $G$,
there is a Left option $G^{RL}$ of $G^R$
such that $G^{RL}$ is iteratively zero.
For short, we might simply say that, for every First option  $G^F$ of $G$,
there is a Second option $G^{FS}$ of $G^F$
such that $G^{FS}$ is iteratively zero.

The definition is justified by transfinite induction on birthdays.
The game $0$ is iteratively zero, since it has no option,
and by the convention that a universal quantification
over the empty set is always true.
Given an arbitrary game $G$ of birthday $b(G)$, all the subpositions
of $G$ have birthday  strictly smaller than $b(G)$, so that 
we inductively know which subpositions are 
iteratively zero and which  subpositions are not. 

In an iteratively zero game the Second player has a winning strategy
such that he always moves to Conway zero games, but
we are asking a bit more: 
in any option chosen by Second,
the next player, whoever she is (namely, here we allow
possibly non alternating runs between Left and Right),
looses the game; again, in such a way
that the other player always can move to Conway zero games, actually,
in such a way that the recursive condition is satisfied.
 \end{definition}   

Of course, the above definition works up to
extensional equivalence, namely, if 
$G \cong H$, then $G$ is iteratively zero 
if and only if $H$ is.
As we just mentioned, every iteratively zero game
is second winner, that is, a Conway zero game.
The reverse implication does not necessarily hold, as
we are going to show.

\begin{examples} \labbel{exic}
(a) Let $2= \{ \, 1 \mid  \,\}$
and  $2^\circ = \{ \, 0, 1 \mid  \,\}$.
The game $ G = \{ \, -1 \mid  2^ \circ  \,\}$ 
is iteratively zero, since Left can only move
to $-1 = \{ \,  \mid 0 \,\}$, so Right moves to the zero game $0$.
If Right moves first, she moves to $2^ \circ $,
so Left can move to $0$ (though it is not his best possible
move!). 

On the other hand, 
$ H = \{ \, -1 \mid  2  \,\}$ 
is not iteratively zero, since, as above,
if Right moves first, she must move to $2$,
but then  Left  is forced to move to $1$.
Of course, Left wins the play, but he cannot accomplish the condition
of always moving to iteratively zero games, in particular, Conway zero. 
 
(b) As a possibly more significant example, 
$K= 1/2 + 1/2 -1$ is not iteratively zero, where
$1/2 = \{ \, 0 \mid 1 \,\}$. Indeed, Right can move
on $-1$ and Left cannot turn the game to the exact value $0$.
Of course, moving on $-1$ is not the best move for Right; in any case,
the condition of being iteratively zero is not satisfied.    

(c) Now consider the game 
$K^\bullet= \{ \,  0 \mid  K  \,  \mid \mid \,  0 \mid  0  \,\}$,
with $K$ as in (b).
Since $K \ceq 0$, we get   $K^\bullet \ceq 0$.
Moreover, each player, when moving second,
has a winning strategy such that he can always choose
options which are Conway zero games. 
Indeed, all the moves are forced, except when Left moves first,
with a forced Right reply to $K$. Then Left moves to
 $1/2 -1$ and Right wins moving to the Conway zero game 
$1-1$. 
 
However, $K^\bullet$ is not iteratively zero, 
since, as above,  when Left moves first,
Right must  reply to $K$, which is a zero game, but not
an iteratively zero game, by (b).
Hence the recursive condition in Definition \ref{iter} is not satisfied!

The point is that Left has started the game, with Right replying
choosing (necessarily) the option  $K$. If the play continues
with Left moving, Right can reply choosing the option
$1-1$, which is Conway zero, in fact, iteratively zero. 
However, in the definition of iteratively zero we require
that the above situation holds not only for (alternating)
plays, but also, say, for the \emph{run}   in which Right is expected
to move on $K$, namely, he makes two consecutive moves
(or, as in the introduction, we may assume that the players
are dubbed First and Second and they strictly alternate in playing, 
but First is allowed each time to change her role of Left or Right).
Then, as remarked in (b), Right can move
on $-1$ and Left cannot turn the game to the exact value $0$.
\end{examples}   

\begin{lemma} \labbel{lemi}
Let $G$ and $H$ be combinatorial games.
  \begin{enumerate}[({b}1)]
\item 
An impartial game is iteratively zero
if and only if it is Conway zero.
\item
If  $G$ is iteratively zero,
then  $-G$ is iteratively zero.  
\item
If both $G$ and $H$ are iteratively zero,
then $G+H$ is iteratively zero.  
\item
The game $H-H$ is iteratively zero. 
\item
If both $H$ and $G+H$ are iteratively zero,
then  $G$ is iteratively zero.   
  \end{enumerate} 
 \end{lemma}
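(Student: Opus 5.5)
All five items will be proved by transfinite induction on birthdays. Throughout I use the recursive form of Definition \ref{iter}: $G$ is iteratively zero iff every First option $G^F$ has a Second option $G^{FS}$ that is iteratively zero.

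\emph{Item (b1).} In an impartial game the Left and Right options coincide, hereditarily, so First/Second roles are irrelevant. I will show by induction that for impartial $G$ both "Conway zero" and "iteratively zero" are equivalent to the statement that every option $G'$ has an option $G''$ with the same property. For Conway zero this is the usual $\mathcal P$-position recursion. Since every option of an impartial game is impartial, the inductive hypothesis identifies the two notions on $G''$, and so they agree on $G$.

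\emph{Item (b2).} The options of $-G$ are the negatives of the options of $G$, with Left and Right exchanged. So for a First option $(-G)^F = -(G^{F'})$ I pick the Second reply $-(G^{F'S'})$, where $G^{F'S'}$ is the iteratively zero reply in $G$. By induction $-(G^{F'S'})$ is iteratively zero.

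\emph{Item (b3).} A First move in $G+H$ is made in one component, say $G^F+H$. Second answers in the same component with $G^{FS}+H$, and this is iteratively zero by the inductive hypothesis applied to the pair $(G^{FS},H)$, whose birthday sum is smaller.

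\emph{Item (b4).} This is the Tweedledum–Tweedledee strategy. A First move in $H$, giving $H^F - H$, is answered by the mirror move in $-H$, giving $H^F - H^F$. A First move in $-H$ is answered symmetrically. Each answer is a Second option of the right kind. Indeed, if First moves as Left to $H^L - H$, then Right replies in $-H$ by moving to $-(H^L)$, which is a Right option of $-H$. The resulting positions are of the form $K-K$ with $K$ of smaller birthday, so they are iteratively zero by induction.

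\emph{Item (b5).} This is the main obstacle, since we must produce strategies for $G$ from strategies for $G+H$ and for $H$. The natural plan is induction on $b(G)+b(H)$, proving: if $H$ and $G+H$ are iteratively zero, then so is $G$. Take a First option $G^F$ of $G$. In $G+H$, First may move to $G^F+H$, and Second has an iteratively zero reply, which is one of two kinds.

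\begin{enumerate}
\item The reply is $G^{FS}+H$. Then $H$ and $G^{FS}+H$ are iteratively zero, so $G^{FS}$ is iteratively zero by induction, and it is the required reply in $G$.
\item The reply is $G^F+H^S$, where $H^S$ is a Second-type option of $H$, so $H^S$ is an option of $H$ in the role opposite to First. Now $H^S$ is a First option of $H$ in which First plays the role that Second just played. Since $H$ is iteratively zero, there is a reply $H^{SF}$ in the original First role that is iteratively zero. In $G^F+H^S$ (iteratively zero), let the opponent, now in the role that Second just played, make that same move to $G^F+H^{SF}$. This is allowed because roles may change, and this is exactly where the iterative (non-alternating) definition is needed. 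There is again an iteratively zero reply, either in $G$ or in $H$.
\end{enumerate}

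In case (2) we have descended in $H$ twice while keeping $G^F$ fixed and an iteratively zero sum. Since $H$ is well-founded, this cannot continue forever: eventually the reply must be in the $G$-component, giving $G^{FS}+H'$ iteratively zero with $H'$ an iteratively zero subposition of $H$. By induction (with smaller $H'$ and smaller $G^{FS}$) this gives $G^{FS}$ iteratively zero.

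To make this rigorous I would formulate an auxiliary claim, proved by induction on $H$: if $H$ is iteratively zero and $G^F+H$ has an iteratively zero Second option, then $G^F$ has an iteratively zero Second option. Its proof is the case analysis above, with the outer induction on $G$ used in the base case. The statement for $G$ then follows by applying the claim to every First option $G^F$.
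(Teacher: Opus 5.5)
Your proof is correct and follows the paper's argument item by item, including the chase of moves in $H$ for (b5); your auxiliary claim (induction on $H$ inside the outer induction on $G$) is a tidy, rigorous version of the paper's ``going on this way'' step. Two minor points: the ``birthday sum'' in (b3) and (b5) should be the natural (Hessenberg) sum, as in the paper, since ordinary ordinal addition is not strictly monotone in its left argument ($2+\omega=0+\omega$); and the move to $G^F+H^{SF}$ is simply a move in the original First role on the iteratively zero game $G^F+H^S$, so no change of roles is actually used at that step.
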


 \begin{proof}
(b1) All subpositions of an impartial game are impartial;
moreover, an impartial game is either first winner or Conway zero.
So if some Player has a winning strategy, all the positions
she chooses
are impartial Conway zero positions, hence, by induction,
iteratively zero.

(b2)
The conclusion is straightforward: just exchange the
role of Left and Right.

(b3) is by induction
on the natural sum of the ordinals $b(G)$ and $b(H)$.
See \cite[p. 469]{S} for a quick introduction to ordinal natural sum.
For short, the \emph{natural (or Hessenberg) sum} of the ordinals $\alpha$ and
$\beta$ is obtained by expressing $\alpha$ and $\beta$ in Cantor
normal form and summing them as if they were polynomials in $ \omega$.
The natural sum is strictly monotone on both arguments; this fact
justifies the induction.

If the First player moves on, say, $G$, choosing the option
$G^F$, then, by assumption, Second has an option $G^{FS}$ 
which is an iteratively zero game. By induction,  
$G^{FS} + H$ is iteratively zero.

(b4) Again by induction, the standard mirror image strategy is iteratively winning.

(b5) By induction on the birthday of $G$.
Let First make the move $G^{F_1}$  on $G$;
this corresponds to a move $G^{F_1}+ H$ on $G+H$.
By the assumptions, Second can chose an option
in  $G^{F_1}+ H$ such that this option is iteratively zero.
If Second's option has the form 
 $G^{F_1S_2}+ H$, then  $G^{F_1S_2}$ is iteratively zero by the
inductive assumption. Thus the option
$G^{F_1S_2}$ is an iteratively zero Second option of $G^{F_1}$.

Otherwise, Second's option in 
$G^{F_1}+ H$ has the form $G^{F_1}+ H^{S_2}$ and 
$G^{F_1}+ H^{S_2}$ is iteratively zero.
Now we can consider Second as the first player on $H$.
Since $H$ is iteratively zero, First (now the second player on $H$)
can choose an iteratively zero position $H^{S_2F_3}$.
Since  $G^{F_1}+ H^{S_2}$ is iteratively zero, Second can move
on $G^{F_1}+ H^{S_2F_3}$ getting an iteratively zero position.
If Second's option has the form 
$G^{F_1S_4}+ H^{S_2F_3}$, then, again by the inductive assumption,
$G^{F_1S_4}$ is iteratively zero.
Otherwise, Second's option has the form
$G^{F_1}+ H^{S_2F_3S_4}$. Again, First has an iteratively zero option
$H^{S_2F_3S_4F_5}$ in $H^{S_2F_3S_4}$. If Second's
reply on $G^{F_1}+ H^{S_2F_3S_4F_5}$ is a move on the first component,
we get again an iteratively zero game $G^{F_1S_6}$ as above.
Otherwise, the game continues on the second component.
Going on this way, since First plays as second in the
iteratively zero game $H$, she eventually wins the play
on $H$, hence sooner or later Second is forced to move on the first component
and we can argue as above.

In any case, we have got 
an iteratively zero Second option  $G^{F_1S_n}$ of $G^{F_1}$.
\end{proof}

\begin{theorem} \labbel{thmi}
For all combinatorial games $G$ and $H$,
if $G$ is iteratively zero, then
$GH$ is iteratively zero, in particular,  Conway zero.
 \end{theorem}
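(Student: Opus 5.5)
We argue by induction on the natural sum $b(G) \oplus b(H)$, proving the statement simultaneously for all pairs. Fix $G$ iteratively zero and an arbitrary $H$; we must show that each First option of $GH$ has an iteratively zero Second reply. By symmetry (using (b2), Proposition \ref{basicn}(k) and $(-G)H = -GH$) it is enough to treat a Left option of $GH$. Left options have one of two shapes: $G^LH + GH^L - G^LH^L$ or $G^RH + GH^R - G^RH^R$. The two cases are analogous, so take the first.

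Since $G$ is iteratively zero, $G^L$ has a Right option $G^{LR}$ which is iteratively zero. Right answers in the component $G^LH$ by moving it to $G^{LR}H + G^LH^L - G^{LR}H^L$. Hence Right's option of $G^LH + GH^L - G^LH^L$ is
\[
X = G^{LR}H + G^LH^L - G^{LR}H^L + GH^L - G^LH^L .
\]
Here we use that the Right options of $G^LH$ include $G^{LR}H + G^LH^L - G^{LR}H^L$, which is exactly the Right-option formula of the product. Then we regroup:
\[
X \cong G^{LR}H - G^{LR}H^L + \bigl(G^LH^L - G^LH^L\bigr) + GH^L .
\]
The summand $G^{LR}H$ is iteratively zero by the inductive hypothesis, since $b(G^{LR}) < b(G)$. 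Likewise $G^{LR}H^L$ is iteratively zero, hence so is its negative by (b2). The summand $G^LH^L - G^LH^L$ is iteratively zero by (b4). Finally, $GH^L$ is iteratively zero by induction, since $b(H^L) < b(H)$ and $G$ is iteratively zero. By (b3), $X$ is iteratively zero, which is what is needed. The Right-option case, and the $G^R,H^R$ variants, are handled identically, using the iteratively zero $G^{RL}$ provided by the definition and the corresponding Left reply in the $G^{R}H$ component.

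The main obstacle is bookkeeping: checking that the reply really is a legal Second move. One must verify, for each of the four types of First options, that the chosen reply is an option of the correct sign in the correct summand. For example, in the second Left case, $G^RH$ must be moved by Right via the $G^{RL}$ entry together with $H^R$ appearing in the formula. The reason this works is that the iteratively zero condition allows the reply in $G^F$ irrespective of alternation, so a Right option of $G^R$ is never needed: only $G^{FS}$ is used.

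It also matters that (b3) and (b4) are stated up to $\cong$, and that sums are associative and commutative up to identity (Proposition \ref{basicn}). This is what allows the regrouping of $X$ without affecting whether it is iteratively zero. The "in particular" part then follows from the remark after Definition \ref{iter} that iteratively zero games are Conway zero.
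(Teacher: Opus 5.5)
Your proposal is correct and is essentially the paper's proof. The paper treats the mirror-image case: a Right option $G^LH + GH^R - G^LH^R$, answered by Left in the $G^LH$ summand via $G^{LR}H + G^LH^R - G^{LR}H^R$, then concluded with the inductive hypothesis, (b4) and (b3). The only difference is the induction measure: the paper inducts on $b(G)$ with a subinduction on $b(H)$, whereas you use the natural sum, and either one covers the products $G^{LR}H$, $G^{LR}H^{L}$ and $GH^{L}$ that you need.
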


 \begin{proof}
By induction on the birthday of $G$ 
and a subinduction on the birthday of $H$.
So, given $G$ and $H$, let us assume that the implication 
holds for every pair $G^\diamondsuit$,  $H^\diamondsuit $, with
 $b (G^\diamondsuit) < b(G)$ and arbitrary $H^\diamondsuit$.
Moreover, assume that the implication holds for the
given $G$ and for arbitrary $H^\heartsuit$ with
  $b (H^\heartsuit) < b(H)$.

 A typical Right option of $GH$
is 

($\bullet$)  $ G^LH +  GH^R  - G^LH^R$.

Then Left can choose the option
 $G^{LR}H + G^LH^R - G^{LR}H^R$ 
of $G^LH  $, where  the option
$G^{LR}$ of $G^{L}$ is such  that
$G^{LR}$ is iteratively zero.
The resulting option in the sum ($\bullet$) is
$  G^{LR}H + G^LH^R - G^{LR}H^R + GH^R- G^LH^R$.
By the inductive assumption, both $G^{LR}H $ and
$G^{LR}H^R $ are  iteratively zero.
Again by the inductive assumption,
$GH^R$ is iteratively zero. The remaining summands are
$G^LH^R$ and $-G^LH^R$, whose sum is iteratively zero,
by Lemma \ref{lemi}(b4).
In conclusion, the option chosen by Left 
in ($\bullet$) is iteratively zero,
by Lemma \ref{lemi}(b3).
  
All the remaining cases 
(considering all the other possible First options of $GH$) are treated in a similar way.
\end{proof}

\begin{remark} \labbel{notC}
The analogue of Theorem \ref{thmi}
fails for Conway equivalence, actually, there is 
 a game $K$ such that $K \ceq 0$ but not  $K^2 \ceq 0$.
In particular, this provides a strong (counter)example 
showing that, for general combinatorial games,
  product is not well-defined with respect to Conway 
equivalence. 

Indeed, set  $G = \{ \, -1,0 \mid 0,1 \, \}$,  $H =  * =  \{ \, 0 \mid  0 \, \} $
and $K=G+H$. The game $K$ 
is easily shown to be second-winner, that is,
$K \ceq 0$ 
(in fact, 
$G$ and $H$ are Conway equivalent.) 
Let us compute 
$* \cdot * =  *$,
$HG = GH  =   G \cdot * = \{ \,  0, * \mid 0, * \,\} $, hence
\begin{equation*}  
K^2 = (G+H)(G+H) \ceq 
 G^2 + GH + HG + H^2 \ceq  G^2 +  \{ \, 0 \mid  0 \, \},
\end{equation*}     
where we have used the fact that Conway product
is distributive, up to Conway equivalence \cite[Theorem 7]{C}, \cite[VIII, 
Proposition 2.2(e)]{S} (compare also Proposition \ref{lemip} below).
 Considering the Left options $-1$, $0$ 
 in the definition of the Conway product $G \cdot G$ we get that 
$ G \cdot (-1) + 0  \cdot G - 0 \cdot  (-1) = -G = G \ceq * $
is a Left  option of $G^2$; similarly
$G \ceq *$  is a Right  option of $G^2$.
There is no need to fully compute $G^2$ to see that 
$G^2 +  \{ \, 0 \mid  0 \, \}$ is a first-winner game (hence not $ \ceq 0$),
since the first player can always move in  $G^2$
choosing the option equivalent to $*$. The second player is left to play with
a game equivalent to
$*+*$, so he loses. 
 \end{remark}

\section{An equivalence relation making $\mathbf{PG}$ a Ring} \labbel{ringsec}

\begin{definition} \labbel{ggd}
For combinatorial games $G$ and $H$ 
we say that $G$ and $H$ are \emph{iteratively equivalent},
in symbols,  $G \eqi H$ if the game
$G - H$ is iteratively zero.

In particular, $G \eqi 0$
if and only if $G$ is iteratively zero, 
by Proposition \ref{basicn}(a3). 
Moreover,
if  $G \eqi H$,
then  $G \ceq H$, that is, $\eqi$ is finer than $\ceq$.  
 \end{definition}   

\begin{proposition} \labbel{eqeqi}
  \begin{enumerate}[(i)]    
\item   
The relation $\eqi$ is an equivalence relation. 
\item
If $H$ is iteratively zero, then $G + H \eqi G$,
for every game $G$. 
 \item
The relation $\eqi$ respects $+$, namely, 
if   $G \eqi H$ and  $K \eqi J$, then 
$G +K  \eqi H+J$.
 \end{enumerate}
\end{proposition}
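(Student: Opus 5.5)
The plan is to reduce every item to Lemma \ref{lemi} and the identities of Proposition \ref{basicn}. We work up to $\cong$; iterative zeroness is invariant under $\cong$. By (a3)–(a7), $-(G-H)\cong H-G$, and sums may be rearranged freely up to $\cong$.

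For (i) we check the three properties in turn. Reflexivity holds because $G-G$ is iteratively zero by Lemma \ref{lemi}(b4). For symmetry, suppose $G-H$ is iteratively zero. Then $-(G-H)\cong (-G)+H\cong H-G$ by (a6), (a7) and (a4), so $H\eqi G$ by (b2).

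Transitivity is the only step where we cannot simply add the two hypotheses. Suppose $G-H$ and $H-K$ are iteratively zero. By (b3) the sum $(G-H)+(H-K)$ is iteratively zero. Using (a4) and (a5), it is isomorphic to $(G-K)+(H-H)$. This is not literally $G-K$, since $H-H$ need not be isomorphic to $0$. Here we use (b5): take $H-H$ in the role of the iteratively zero game (by (b4)) and $G-K$ in the role of $G$. Then $(G-K)+(H-H)$ being iteratively zero forces $G-K$ to be iteratively zero. So the key point is that cancellation of iteratively zero summands is supplied by (b5).

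For (ii) we compute $(G+H)-G\cong H+(G-G)$ using (a4)–(a6). This is a sum of two iteratively zero games, by the hypothesis and (b4), so it is iteratively zero by (b3). Hence $G+H\eqi G$.

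For (iii) we compute $(G+K)-(H+J)\cong (G-H)+(K-J)$, using (a6) and commutativity and associativity. By hypothesis both summands are iteratively zero, so the sum is iteratively zero by (b3), which gives $G+K\eqi H+J$.
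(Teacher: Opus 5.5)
Your proof is correct and follows the paper's argument essentially step for step. Reflexivity and symmetry come from (b4) and (b2), transitivity comes from (b3) followed by cancelling the iteratively zero summand $H-H$ via (b4) and (b5), and (ii) and (iii) follow from (b3) (with (b4) for (ii)); the only cosmetic difference is that you track the rearrangements up to $\cong$, whereas Proposition \ref{basicn} gives them as literal identities, which is harmless since iterative zeroness is $\cong$-invariant.
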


  \begin{proof}
(i) Reflexivity of $\eqi$ 
follows from Lemma \ref{lemi}(b4)
and symmetry follows from (b2), since 
$H - G = -(G-H)$,
by Proposition \ref{basicn}(a4)(a6)(a7).

If $G \eqi H$ and $H \eqi K$, 
then $G - H $ and 
 $H - K $
are iteratively zero, 
hence  $G-H + H - K $ is iteratively zero,
by Lemma \ref{lemi}(b3)
(of course, we are implicitly using 
 Proposition \ref{basicn}(a5). From now on
we will not always explicitly mention when we use 
elementary facts from Proposition \ref{basicn}).
Thus  
$(G- K) + (H -H)$ is iteratively zero.
By Lemma \ref{lemi}(b4)(b5),
$G-K$ is iteratively zero, that is, 
$G \eqi K$.

(ii) follows straightforwardly
from Lemma \ref{lemi}(b3)(b4). 

(iii) If $G \eqi H$ and $K  \eqi J$ then 
both $G-H$ and $K-J$ are  iteratively zero,
hence
$G-H+K-J$ is iteratively zero, 
by Lemma \ref{lemi}(b3).
That is,  $G+ K -(H+J)$ is iteratively zero
and this means $G + K \eqi H+J$.
 \end{proof} 

For our next proposition, we'll need a definition:

\begin{definition}
\labbel{genetic-def}
We say an equivalence relation $\equiv$ on $\mathbf{PG}$ is
 \emph{option-regular}
 if, whenever we have games $G$ and $H$, such that
\begin{itemize}
\item for each left option $G^L$ of $G$, there is a left option $H^L$ of $H$
such that $G^L\equiv H^L$, and
\item for each left option $H^L$ of $H$, there is a left option $G^L$ of $G$
such that $H^L\equiv G^L$, and
\item for each right option $G^R$ of $G$, there is a right option $H^R$ of $H$
such that $G^R\equiv H^R$, and
\item for each right option $H^R$ of $H$, there is a right option $G^R$ of $G$
such that $H^R\equiv G^R$;
\end{itemize}
then $G\equiv H$.
\end{definition}

\begin{remark}
Note that the conditions above are equivalent to saying that, if we look at the
set of left options of $G$ modulo $\equiv$ and the set of left options of $H$
modulo $\equiv$, then there is a bijection between these with each left option
of $G$ matched to an equivalent left option of $H$; and the same thing for the
right options.
\end{remark}

In some previous works we have used the word ``genetic''
in place of ``option-regular'', but the present terminology
seems much clearer.    
The word ``genetic'', when applied to concepts in combinatorial game theory,
normally refers to a style of definition, rather than to a formal notion by
which one can say that a particular relation or operation is or is not genetic.
An equivalence relation with a genetic definition certainly ought to be
option-regular; but an equivalence relation can potentially be option-regular
without being given by a genetic definition.

Examples of well-known option-regular equivalence relations include isomorphism
$\cong$, Conway equivalence $\ceq$, and two games having the same comparison to
zero (positive, negative, zero, or fuzzy).

\begin{proposition} \labbel{gen}
The relation $\eqi$ is option-regular.
 \end{proposition}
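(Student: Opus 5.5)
We verify Definition \ref{genetic-def} directly. Assume $G$ and $H$ satisfy the four matching conditions with respect to $\eqi$; we must show that $G - H$ is iteratively zero. By Definition \ref{iter}, this means checking that every First option of $G-H$ has an iteratively zero Second reply. Since $G - H = G + (-H)$, the options of $G-H$ are of four kinds: $G^L - H$, $G - H^R$ (Left options), and $G^R - H$, $G - H^L$ (Right options). This uses that Left options of $-H$ are the negatives of Right options of $H$, and vice versa.

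Take first a Left move to $G^L - H$. By the first hypothesis there is a Left option $H^L$ of $H$ with $G^L \eqi H^L$. Then $-H^L$ is a Right option of $-H$, so $G^L - H^L$ is a Right option of $G^L - H$. Since $G^L \eqi H^L$, this option is iteratively zero, which is exactly the required Second reply.

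If instead Left moves to $G - H^R$, we use the fourth hypothesis. It gives a Right option $G^R$ of $G$ with $H^R \eqi G^R$, so $G^R - H^R$ is a Right option of $G - H^R$. This game is iteratively zero because $G^R \eqi H^R$, by the symmetry part of Proposition \ref{eqeqi}(i); equivalently, it follows from Lemma \ref{lemi}(b2) together with $-(H^R - G^R) = G^R - H^R$. The two Right moves, to $G^R - H$ and to $G - H^L$, are handled symmetrically, using the third and second hypotheses respectively. In each case Left replies by moving in the other component to the matched option.

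No induction is needed, since the hypotheses give the iterative zeroness of the reply positions directly. The only delicate point is bookkeeping: one must make sure that the Second reply is a legal option of the right parity. For example, when First moves Left in the $-H$ component, Second must answer with a Right move in the $G$ component. One must also check that each direction of matching (from $G$ to $H$ and from $H$ to $G$) is invoked in the appropriate case, with symmetry of $\eqi$ orienting the differences correctly. I expect no real obstacle beyond this case check.
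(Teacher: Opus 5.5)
Your proof is correct and follows the paper's argument: you answer each First move in one component of $G-H$ by moving in the other component to the matched $\eqi$-equivalent option, with no induction needed. Your explicit handling of First's moves in $-H$, via the fourth and second hypotheses plus symmetry of $\eqi$, spells out the case the paper dismisses as symmetrical.
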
 

\begin{proof}
Let $G$ and $H$ have $\eqi$-equivalent options;
we want to show that $G \eqi H$, namely,
that $G-H$ is iteratively zero.
So let First make a move 
$G^F-H$ on $G-H$.
By assumption, there is a First option
$H^F$ on $H$ such that  
$G^F \eqi H^F$, namely,
 $G^F-(H^F)$ is iteratively zero. 
But $-(H^F)$ is a Second option\footnote{Here we are
 using the assumption in the definition of 
an iteratively zero game that if, at some point, First plays
as Left, then, in the next move Second plays as Right, and conversely.} of $-H$,
hence we get the conclusion.
The case when First makes a move 
$G+(-H)^F$ on $G-H$
is symmetrical.
 \end{proof}    

Another subtle technical result is necessary,
before we can prove our main theorem. 
The main nuisance is that the games 
$(G+H)K  $ and $  GK + HK$ 
are generally not isomorphic, since, say,
\begin{equation}\labbel{fop}     
(G^F+H)K + (G+H)K^F - (G^F+H)K^F
  \end{equation}
is a First option of the former game,  while 
\begin{equation}\labbel{fopp} 
      G^FK + GK^F - G^FK^F +  HK
  \end{equation}
is the ``corresponding'' option of the latter game. 

However, as well-known, $(G+H)K  $ and $  GK + HK$ 
are Conway equivalent. Since here we deal with
a relation finer than $\ceq$, we need to show that,
more generally, $(G+H)K  $ and $  GK + HK$ 
are $\eqi$-equivalent.  
This will be used in  order to prove that 
$\eqi$ respects product.

\begin{lemma} \labbel{lemip}
For all games $G, H, K$,
the following holds. 
\begin{equation}\labbel{quass}
(G+H)K \eqi GK + HK
   \end{equation}    
 \end{lemma}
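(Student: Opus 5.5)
We prove \eqref{quass} by induction on the natural sum $b(G)\oplus b(H)\oplus b(K)$, using the option-regularity of $\eqi$ (Proposition \ref{gen}). We must therefore match, up to $\eqi$, each First option of $(G+H)K$ with an option of the same type of $GK+HK$, and conversely. Proposition \ref{gen} then yields $(G+H)K \eqi GK+HK$ directly.

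We first pair up the options. A First option of $(G+H)K$ arises from an option of $G+H$ and an option of $K$, and the option of $G+H$ is either $G^F+H$ or $G+H^F$. Take the case $G^F+H$, which gives \eqref{fop}. Its natural counterpart in $GK+HK$ is \eqref{fopp}, an option of the same type, since the sign conventions for Left and Right options of the product depend only on the types of $G^F$ and $K^F$. The case $G+H^F$ is symmetric and pairs with $GK + H^FK + HK^F - H^FK^F$.

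Conversely, options of $GK+HK$ either move in $GK$, giving \eqref{fopp}, or in $HK$, giving the symmetric form. Each of these is the counterpart of an option of $(G+H)K$ constructed in the same way. So the correspondence is a bijection between options of matching type, and it remains to show that corresponding options are $\eqi$-equivalent.

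To compare \eqref{fop} with \eqref{fopp} we apply the inductive hypothesis to the three triples with smaller natural sum:
\[
(G^F+H)K \eqi G^FK+HK,\quad (G+H)K^F \eqi GK^F+HK^F,\quad (G^F+H)K^F \eqi G^FK^F+HK^F .
\]
By Proposition \ref{eqeqi}(iii), $\eqi$ respects sums. It also respects negation, by Lemma \ref{lemi}(b2) together with Proposition \ref{basicn}. Hence \eqref{fop} is $\eqi$-equivalent to
\[
G^FK+HK+GK^F+HK^F-G^FK^F-HK^F .
\]
The summands $HK^F-HK^F$ form an iteratively zero game by Lemma \ref{lemi}(b4), so Proposition \ref{eqeqi}(ii) removes them, using commutativity and associativity from Proposition \ref{basicn}. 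What remains is isomorphic to \eqref{fopp}.

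The main obstacle is bookkeeping rather than depth. We need the induction measure to decrease for all three auxiliary triples, which holds because each replaces $G$ by $G^F$, or $K$ by $K^F$, or both. We also need the $-(\cdot)$ terms handled correctly, namely that $-(A)\eqi -(B)$ whenever $A\eqi B$; this follows because $-A-(-B)\cong -(A-B)$. Finally, we must check carefully that Left options correspond to Left options under the product's sign conventions.
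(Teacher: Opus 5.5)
Your proof is correct and follows essentially the same route as the paper: induction on the natural sum of $b(G)$, $b(H)$, $b(K)$, applying the inductive hypothesis to the three summands of \eqref{fop}, cancelling $HK^F-HK^F$ via Lemma \ref{lemi}(b4) and Proposition \ref{eqeqi}, and concluding by option-regularity (Proposition \ref{gen}). You are somewhat more explicit than the paper in two places it leaves implicit: the matching of Left/Right option types under the product's sign conventions, and the fact that $\eqi$ respects negation.
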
 

\begin{proof} 
Let us assume \eqref{quass} for games of smaller birthdays,
namely, work by induction on 
the natural sum of $b(G) $, $ b(H) $ and $ b(K)$. 
By the inductive hypothesis
and Proposition \ref{eqeqi}(iii),
 the expression in \eqref{fop} turns out to be
$\eqi$ equivalent to  
$G^FK+HK + GK^F+HK^F - G^FK^F -HK^F$,
which differs from \eqref{fopp} by 
$HK^F  - HK^F$.
By Lemma \ref{lemi}(b4) and  Proposition \ref{eqeqi} 
the expressions in \eqref{fop} and \eqref{fopp}
are $\eqi$-equivalent.  
 The other cases are proved in a similar way.
This shows  that the games in \eqref{quass}
have $\eqi$-equivalent options, hence they are
$\eqi$-equivalent, by Proposition \ref{gen}.
\end{proof}     

In particular, for every game $G$, we get
$2G \eqi G+G$,  $3G \eqi G+G+G$, etc., 
of course, provided we define $3 =\{ \, 2 \mid  \,\}$,
in general, $n+1 = \{ \, n \mid  \,\}$.  

So far, \eqref{quass} does not amount to show
that  we have the distributive property in some Ring,
since we have not yet showed that $\cdot$ is well-defined
on the equivalence classes. In fact, we will use 
\eqref{quass} in the proof.

\begin{theorem} \labbel{ringth}
The relation $\eqi$ is an option-regular equivalence relation
on the class $\mathbf{PG}$ of combinatorial games;
moreover, $\eqi$ respects both $+$ and $\cdot$ .
The quotient  $\mathbf{PG} / {\eqi}$ is a commutative
Ring with unity. 
 \end{theorem}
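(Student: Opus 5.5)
Most of the statement is already available from earlier results: $\eqi$ is an equivalence relation respecting $+$ by Proposition \ref{eqeqi}, and it is option-regular by Proposition \ref{gen}. So the real work is to show that $\eqi$ respects $\cdot$, and then to check the Ring axioms in the quotient.

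For the product I would reduce to Theorem \ref{thmi} using Lemma \ref{lemip}. Suppose $G \eqi H$ and $K \eqi J$. By commutativity (Proposition \ref{basicn}(a10)) and transitivity, it is enough to show $GK \eqi HK$; then $GK \eqi HK \cong KH \eqi JH \cong HJ$. The chain for $GK \eqi HK$ is as follows:
\begin{itemize}
\item $G-H$ is iteratively zero, so by Theorem \ref{thmi} the game $(G-H)K$ is iteratively zero.
\item By Lemma \ref{lemip} and Proposition \ref{basicn}(a11), $(G-H)K \eqi GK + (-H)K = GK - HK$.
\item Since $\eqi$ is an equivalence relation and a game is $\eqi 0$ exactly when it is iteratively zero, $GK-HK$ is iteratively zero, i.e.\ $GK \eqi HK$.
\end{itemize}
One point needs a small check: Proposition \ref{basicn} is stated up to $\cong$ or $=$, and $\cong$ must imply $\eqi$. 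This holds because iterative zeroness is invariant under $\cong$ and $G-G$ is iteratively zero by Lemma \ref{lemi}(b4); together these give that $G\cong H$ implies $G-H\cong G-G$ is iteratively zero.

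Next, the Ring axioms in $\mathbf{PG}/{\eqi}$. The additive ones follow from Proposition \ref{basicn}(a3)--(a7) together with $G+(-G)\eqi 0$ from Lemma \ref{lemi}(b4). Commutativity of the product and the unity axiom follow from (a9) and (a10). Distributivity follows from Lemma \ref{lemip} combined with commutativity.

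The main obstacle is associativity of the product, $(GH)K \eqi G(HK)$, which is not established earlier. I would prove it by induction on the natural sum $b(G)\oplus b(H)\oplus b(K)$, mimicking Lemma \ref{lemip}, and conclude with option-regularity (Proposition \ref{gen}). A typical option of $(GH)K$ has the form $(GH)^F K + GH K^F - (GH)^F K^F$, where $(GH)^F = G^\ast H + G H^\ast - G^\ast H^\ast$. I would expand it using distributivity (Lemma \ref{lemip}, now valid modulo $\eqi$ since product respects $\eqi$), using the inductive hypothesis on smaller triples to reassociate each term. The result is a sum of seven signed terms of the form $X Y Z$ with each of $X,Y,Z$ either the original game or an option of it, and the sign determined by how many options occur. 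The same computation for $G(HK)$ yields the same symmetric expression, and the First/Second parities of the chosen options correspond. So the options are $\eqi$-equivalent, and Proposition \ref{gen} gives the claim. The delicate part is the bookkeeping, that the Left/Right option types match, but it is the same as in Conway's classical proof of associativity, now carried out with $\eqi$ in place of $\ceq$.
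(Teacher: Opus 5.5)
Your proposal is correct and follows essentially the same route as the paper: product-compatibility via Theorem~\ref{thmi}, \eqref{quass} and transitivity, the Ring axioms from Proposition~\ref{basicn}, Lemma~\ref{lemi}(b4) and Lemma~\ref{lemip}, and associativity by rerunning Conway's classical argument with \eqref{quass} in place of Conway equivalence. The only difference is that the paper merely cites that classical argument, whereas you spell it out: induction on the natural sum, the seven-term expansion whose terms are matched by triples of options with the same parity of Right options, and then option-regularity. That is a correct filling-in of the step the paper leaves to the reference.
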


 \begin{proof}
By Propositions \ref{eqeqi}, \ref{gen},   
$\eqi $ is an option-regular equivalence relation which respects $+$.

Let $G \eqi H$ and $K$ be a game.
 Then $G-H$ is iteratively zero,
hence $(G-H)K$ is iteratively zero, 
by Theorem \ref{thmi}.
By \eqref{quass},
$GK-HK$  is $\eqi$-equivalent to   
$(G-H)K$, hence iteratively zero, 
by transitivity of  $\eqi$.
Thus $GK \eqi HK$.
This means that $\eqi$ respects $\cdot$,
using commutativity of $\cdot$. 

We have showed that $\eqi$ respects 
both $+$ and $\cdot$,
hence the operations are well-defined on the
equivalence classes.

Commutativity of the operations,
associativity of addition  and the existence of  neutral
elements follow from Proposition \ref{basicn}(a3)-(a5), (a9), (a10). 
By Lemma \ref{lemi}(b4) additive inverses exist.
Distributivity of multiplication with respect to addition
is Lemma \ref{lemip}.  
The classical argument showing that 
$G(HK)$ is Conway equivalent to
$(GH)K$ \cite[p. 19]{C}, \cite[VIII, Proposition 2.2(d)]{S}
does use an equivalence like \eqref{quass}, 
 though this aspect 
 goes generally unmentioned.
In any case, we did prove \eqref{quass},
hence the argument, together with transitivity of $\eqi$,
  provides associativity
modulo $\eqi$. 
 \end{proof}

\section{Remarks and problems about $\mathbf{PG} / {\eqi}$} \labbel{rmki}

\begin{remark} \labbel{notint} 
(a) The Ring
$\mathbf{PG} / {\eqi}$ is not  an Integral Domain.

For example, $2 * = (1+1)* \eqi 1*+1* = *+* \eqi 0$,
by distributivity and since $* = -*$, hence
 $*+* \eqi 0 $; recall that $ * = \{ \, 0 \mid 0 \,\}$.

(b)
More generally, if $G$ is a game such that 
there is some game $H$ such that $H+H \eqi G$,
then $G * \eqi 0$; actually,    $G Y \eqi 0$,
for every game $Y$ such that $Y+Y \eqi 0$.
Indeed,  under the above assumptions
$0 \eqi H 0 \eqi H(Y+Y) \eqi HY+HY \eqi (H+H)Y \eqi GY  $.

In particular, if $1/2$ is  $\{ \, 0 \mid 1 \,\}$,
then it is not the case that 
$1/2 + 1/2 \eqi 1$ (compare also
Example \ref{exic}(b)). Actually, $1$ is not divisible by $2$:
there is no game $H$ such that $H+H \eqi 1$.    
\end{remark}

The above remark 
could suggest the idea that 
$\mathbf{PG} / {\eqi}$, though a ring, is not actually well-behaved.
Be that as it may, we are going to show that 
  $\mathbf{PG} / {\eqi}$ is richer than the Field
of surreal numbers. Recall that $0 < G$ if Left has a winning strategy on $G$ 
whoever starts playing, and that
$H < G$ if $0 < G-H$.
Recall that a  \emph{number} 
is a game $G$ such that $H^L < H^R$,
for every (possibly improper) subposition $H$ of $G$
and all options  $H^L $ and $  H^R$ of $H$. 
A \emph{surreal number} 
is the $\ceq$-class of some number.
The Class of surreal numbers is a Field
containing both the ordinals and the real numbers
(it is fundamental to check that product is well-defined up to
Conway equivalence, for numbers). 

Let $\mathbf{Numb}/ {\eqi}$
be the subclass of $\mathbf{PG} / {\eqi}$ consisting of those
equivalence classes of the form $G/ {\eqi}$, for some number $G$.
Note that if $G$ is a number and $G \eqi H$, then $H$ 
is not necessarily a number, for example, $\{ \, * \mid * \,\} \eqi 0$,
but $\{ \, * \mid * \,\}$ is not a number.

\begin{proposition} \labbel{inumbsur}
$\mathbf{Numb}/ {\eqi}$ is a SubRing of
$\mathbf{PG} / {\eqi}$. The Class $\mathbf  C$ of Conway zero 
games in $\mathbf{Numb}/ {\eqi}$ is an Ideal,
and the Quotient by this Ideal is isomorphic to 
the Class of surreal numbers.
 \end{proposition}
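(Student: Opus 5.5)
Our plan is to use only standard facts about numbers from \cite{C,S}, together with the fact that $\eqi$ is finer than $\ceq$. The proof has three parts: closure, the ideal property, and identification of the quotient.

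\emph{Closure.} For numbers $G,H$, the games $G+H$, $-G$ and $GH$ are again numbers up to Conway equivalence. However, the form $G+H$ need not itself be a number, so we cannot argue with forms directly. Instead we proceed on classes. Given $G/{\eqi}$ and $H/{\eqi}$ with $G,H$ numbers, we want a number $N$ with $N \eqi G+H$. We use the classical fact, proved by induction in \cite[Theorem 3.1]{C}/\cite{S}, that the sum of two numbers (as forms) is a number: every option of $G+H$ has the form $G^L+H$ etc., and the needed inequalities $ (G+H)^L<(G+H)^R$ follow inductively. The same holds for $-G$. For products, the classical result is that the product form $GH$ of two numbers is a number; this is the standard theorem that numbers are closed under multiplication (\cite[Theorem 8]{C}), stated for forms. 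The constants $0$ and $1$ are numbers. So the class $\mathbf{Numb}/{\eqi}$ is closed under the ring operations of Theorem \ref{ringth}, and it is a SubRing.

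\emph{The ideal.} The natural map $\pi\colon \mathbf{Numb}/{\eqi}\to \mathbf{No}$ sends $G/{\eqi}$ to $G/{\ceq}$. It is well defined because $\eqi$ is finer than $\ceq$. It preserves $+$ and $-$ because Conway equivalence respects these operations on all games. It preserves products because Conway product is well defined on numbers up to $\ceq$, and here representatives can always be chosen to be numbers. It sends $1$ to $1$. Hence $\pi$ is a ring homomorphism, and its kernel is exactly $\mathbf C$, the classes of Conway zero numbers. Being a kernel, $\mathbf C$ is an Ideal. This avoids checking directly that $\mathbf C$ absorbs products.

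\emph{The quotient.} Every surreal number is the $\ceq$-class of some number $G$, and $\pi(G/{\eqi})=G/{\ceq}$, so $\pi$ is surjective. The first isomorphism theorem then gives $(\mathbf{Numb}/{\eqi})/\mathbf C\cong \mathbf{No}$. Note that this is an isomorphism of Rings; the field structure on $\mathbf{No}$ comes for free.

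The main obstacle is the closure step. It needs the form-level statement that the product form $GH$ of numbers $G,H$ is itself a number, not merely Conway equivalent to one. The classical proof in \cite{C} establishes the inequalities $(GH)^L<(GH)^R$ for the actual options, simultaneously with monotonicity of products of numbers. We would cite that argument and check that it is indeed carried out on forms; we believe it is, since \cite{C} proves ``$xy$ is a number'' by induction on the options of the product form. If only the value-level statement were available, we would need a separate argument that any number Conway equivalent to $GH$ is also $\eqi$ to it, which is false in general. So this form-level check is essential.
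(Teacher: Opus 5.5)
Your proof is correct and is essentially the paper's argument. Closure comes from the form-level fact that sums, negatives and products of number forms are number forms. The ideal and the quotient come from $\eqi$ being finer than $\ceq$ together with well-definedness of the product on numbers modulo $\ceq$; you merely package this via the homomorphism $G/{\eqi}\mapsto G/{\ceq}$ and the first isomorphism theorem, instead of checking absorption and identifying cosets directly.

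One small correction: your aside that ``the form $G+H$ need not itself be a number'' is false. As you yourself then show, the sum of two number forms is a number form, so that sentence should simply be deleted.
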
  

\begin{proof}
The first statement follows
from the result that   numbers are closed with respect  to sums and products,
and the opposite of a number is still a number.
Strictly formally, $\mathbf  C$ is a collection of
equivalence classes, but this is inessential, since 
$\eqi$ is finer than $\ceq$.  
$\mathbf  C$ is an Ideal, since the product of a Conway zero \emph{number}
with a number is still Conway zero  (working modulo $\eqi$ 
causes no trouble, since $\eqi$ respects $\cdot$, by Theorem \ref{ringth}).
We get surreal numbers as the quotient since two games $G$ and $H$ 
are Conway equivalent if and only if $G-H$ is Conway zero. 
\end{proof}

\begin{problems} \labbel{probsi} 
  \begin{enumerate}[(a)]    
\item
Study the structure of $\mathbf{PG} / {\eqi}$.
Is $\mathbf{Numb}/ {\eqi}$  an
Integral Domain?
By Proposition \ref{inumbsur}, $\mathbf  C$ is
a maximal Ideal of $\mathbf{Numb}/ {\eqi}$, since the Quotient is
a Field.  Study other  Ideals of $\mathbf{Numb}/ {\eqi}$.
Are there subrings $\mathbf S$  of $\mathbf{PG} / {\eqi}$
which are strictly larger than $\mathbf{Numb}/ {\eqi}$
and such that classes of Conway zero games in $\mathbf S$ 
still form an Ideal?

In another direction, 
working in $\mathbf{PG} / {\eqi}$ instead, 
the classes of games of the form
$G_1Z_1+ \dots + \dots G_mZ_m$, with $Z_1, \dots, Z_m$
Conway zero, constitute a (possibly quite large) Ideal.
Study the Quotient.
\item 
Is it true that a game $G$ is iteratively zero
if and only if 
$GH$ is Conway zero for every combinatorial game $H$?
(The ``only-if'' condition is true by Theorem \ref{thmi}.)
\item
Characterize   $\eqi$  as the finest (or coarsest)
equivalence relation $\equiv$ on $\mathbf{PG}$ satisfying a certain set
of conditions. Eligible properties are being option-regular,
satisfying $G \equiv H$ if and only if $G-H \equiv 0$,
or satisfying a set of conditions taken from 
Lemma \ref{lemi}, Theorem \ref{thmi}  
(for games  $\equiv 0$),
 Lemma \ref{lemip} and Theorem \ref{ringth}. 
\item
More specifically, is $\eqi$ the finest equivalence relation $\equiv$
which (under Conway's operations) makes  $\mathbf{PG} / {\equiv}$  a commutative
Ring? Is $\eqi$ the coarsest (the only) nontrivial relation $\equiv$ which 
makes  $\mathbf{PG} / {\equiv}$  a commutative
Ring? Of course, the answer to the second question 
depends on what we mean by ``non-trivial'',
for example, we might ask that $\equiv$ never identifies two distinct
ordinals.  In this respect, compare also Definition \ref{setofop} below.
\item
By induction, the product of an arbitrary game with an impartial
game is impartial. It follows that
the class of ($\eqi$-classes of games 
$\eqi$-equivalent to some) impartial games is an Ideal
of $\mathbf{PG} / {\eqi}$.
Study the corresponding Quotient.
\item
Describe the Ring of Endomorphisms $\mathbf R$ 
of the group $\mathbf{PG} / {\eqi}$
with game addition, possibly restricted
to appropriate ``good'' classes  
of endomorphisms, both 
to avoid set theoretical issues,
and  to get a more manageable Structure.
By Theorem \ref{ringth},  multiplication with a fixed
 game is an endomorphism of $\mathbf{PG} / {\eqi}$,
hence  $\mathbf{PG} / {\eqi}$
embeds in  $\mathbf R$.

We do not know whether the analogous problem
has been considered for just surreals. 
Automorphism groups of surreals with
respect to various structure have been studied in \cite{KKS}.
\item
Is there a natural definition of some preorder
$\leqi$ on   $\mathbf{PG}$  such  that 
 $\leqi$ induces $\eqi$, namely,
\begin{equation}\labbel{fif}     
\text{$G \eqi H$ if and only if 
both $G \leqi H$ and $H \leqi G$?  } 
  \end{equation}

In case the answer is affirmative, does $\leqi$ induce
the structure of a partially ordered Ring on 
$\mathbf{PG} / {\eqi}$?

\arxiv{Concerning the above question, we can set 
$ G \leqi H$ if in $H-G$ Left can always reply to any first
Right  move by choosing an iteratively zero option.
With this definition, \eqref{fif} actually holds, but 
 $\leqi$ does not seem to be transitive. Does the
transitive closure of $\leqi$ works?}
 \item
 Can we realize  some extension of $p$-adic
numbers, for some $p$, in a way similar to Proposition  \ref{inumbsur}?
 (Compare Question 3 in \cite{BEK}.)

\item Can we realize \emph{surcomplex numbers}
(ie, the Class of surreal complex numbers No[i] \cite[p. 42]{C})?

Of course, we can consider formal sums of  type
$G+Hi$, with the usual distributive rules and $i$ a new
formal object satisfying $i^2 = -1$.   Since we have showed
that $\mathbf{PG} / {\eqi}$ is a Ring, we can ``complexify''
$\mathbf{PG} / {\eqi}$ in the above way.
 It is not clear how relevant the construction can be,
for games which are not numbers. 

But the main point is that formal sums of  the kind
$G+Hi$ have not (yet) given a purely game theoretical meaning,
even when $G$ and $H$ are restricted to surreals.

\item Similarly, can we construct game theoretically 
surreal quaternions, octonions, more generally, Clifford algebras?
Of course, to this end, we should define another
product operation, since Conway product is commutative.
 \end{enumerate} 
\end{problems} 

\begin{remark} \labbel{phy}    
The above problems (i), (j) might be relevant to foundations of physics.
On one hand, surreal numbers have been sometimes proposed as a 
possible tool for dealing 
with physical divergences, e.~g., \cite[p. 162]{T}, \cite{CE}. 

On the other hand, what is relevant here is that
 there is a diffuse conviction  among physicists that
space (more precisely, space-time) has not a fundamental
physical nature, but should ``emerge'' from some
yet unknown more basic structure, sometimes dubbed \emph{pregeometry},
e.~g. \cite[\S 44.4]{MTW}.    
Surreal numbers of birthday $\leq \omega $
provide a very clean \emph{combinatorial} way 
of introducing the real line and, with different restrictions
on birthdays, many generalizations. Thus it is fair to say
that (some subset of) the surreal numbers might actually
constitute the ``pregeometry'' of the real line.
 
However, as we mentioned, higher dimensional generalizations
of the surreal numbers are obtained by rather artificial methods,
in any case, methods which are not actually game-theoretical.
If there is a natural generalization producing 
complex and quaternionic numbers, this would possibly be
a very interesting approach to the problem
of space emergence!
\end{remark}

\section{Further equivalence relations} \labbel{Gro-Tsen}

To the best of our knowledge, the relation
introduced in the following definition 
 first appeared in a question by User Gro-Tsen on 
mathoverflow.net \cite{GT}.

\begin{definition} \labbel{grotsen}
\cite{GT}
For all $G, H \in \mathbf{PG}$,
let $G \eg H$ if  
$G K \ceq H K$, for every 
  $K \in \mathbf{PG}$.
 Since $\ceq$ is an equivalence relation, also 
 $\eg$ is an equivalence relation. 

Let us say that a game $G$ is \emph{Gro-Tsen zero}
if $G \eg 0$, namely, if 
$G K \ceq 0$, for every game $K$.  
 \end{definition}  

\begin{corollary} \labbel{finer}
(i) The relation $\eqi$ is finer then $\eg$.

(ii) The relation $\eg$ is finer then $\ceq$.

(iii) The relations $\ceq$, $\eqi$ and $\eg$ all 
coincide on impartial games. 

(iv) The $\eqi$-classes of  Gro-Tsen zero games form an
ideal of the ring $\mathbf{PG} / {\eqi}$.

(v) In particular, $\mathbf{PG} / {\eg}$
is a Ring with the operations induced by $+$ and $\cdot$.  
 \end{corollary}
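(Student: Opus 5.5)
We prove the five items in order, using Theorem \ref{thmi}, Lemma \ref{lemip} and Theorem \ref{ringth} as the main tools.

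For (i), suppose $G \eqi H$ and let $K$ be arbitrary. By Theorem \ref{ringth}, $\eqi$ respects products, so $GK \eqi HK$. Since $\eqi$ is finer than $\ceq$ (Definition \ref{ggd}), we get $GK \ceq HK$. As $K$ was arbitrary, $G \eg H$.

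For (ii), take $K=1$ in Definition \ref{grotsen}. By Proposition \ref{basicn}(a9), $G1=G$ and $H1=H$, so $G \eg H$ gives $G \ceq H$.

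For (iii), by (i) and (ii) it suffices to show that $G \ceq H$ implies $G \eqi H$ for impartial $G,H$. If $G,H$ are impartial, then so is $G-H$, since $-H$ is isomorphic to $H$ for impartial $H$ (by induction). Moreover $G\ceq H$ means $G-H \ceq 0$, and Lemma \ref{lemi}(b1) then gives that $G-H$ is iteratively zero, that is, $G\eqi H$. Thus all three relations coincide on pairs of impartial games.

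For (iv), we first check that being Gro-Tsen zero is a union of $\eqi$-classes. If $G\eqi G'$ and $G \eg 0$, then $G'\eg G\eg 0$ by (i), since $\eg$ is an equivalence relation. Closure under addition: if $G,G'$ are Gro-Tsen zero and $K$ is arbitrary, then Lemma \ref{lemip} gives $(G+G')K \eqi GK+G'K$, hence $(G+G')K \ceq GK+G'K$. Now $GK\ceq 0$ and $G'K\ceq 0$, and $\ceq$ respects sums, so $(G+G')K\ceq 0$. Closure under negatives follows from $(-G)K=-(GK)$ (Proposition \ref{basicn}(a11)) and the fact that the negative of a Conway zero game is Conway zero. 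Closure under multiplication by arbitrary $L$: we need $(GL)K\ceq 0$ for every $K$. Associativity modulo $\eqi$ (Theorem \ref{ringth}) gives $(GL)K\eqi G(LK)$, and $G(LK)\ceq 0$ because $G$ is Gro-Tsen zero applied to the game $LK$. Since $\eqi$ is finer than $\ceq$, $(GL)K\ceq 0$. This last step is the only delicate point, and it is handled by working modulo $\eqi$ and only at the end passing to $\ceq$ via (i).

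For (v), it is enough to show that $G\eg H$ if and only if $G-H$ is Gro-Tsen zero. Then $\eg$ is the congruence of the ideal from (iv), and the quotient $\mathbf{PG}/{\eg}$ is the ring $(\mathbf{PG}/{\eqi})$ modulo that ideal. For the equivalence, Lemma \ref{lemip} and Proposition \ref{basicn}(a11) give $(G-H)K\eqi GK-HK$, hence $(G-H)K\ceq GK-HK$. Therefore $(G-H)K\ceq 0$ holds if and only if $GK\ceq HK$, by the group structure of $\mathbf{PG}/{\ceq}$. Since the ideal is a union of $\eqi$-classes, the relation it induces on $\mathbf{PG}$ is exactly $\eg$, and $+$ and $\cdot$ pass to the quotient.
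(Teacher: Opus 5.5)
Your proposal is correct and follows essentially the same route as the paper: (i) uses Theorem \ref{ringth} together with the fact that $\eqi$ refines $\ceq$, (ii) takes $K=1$, (iii) follows from (i), (ii) and Lemma \ref{lemi}(b1), (iv) uses distributivity and associativity, and (v) follows from (i) and (iv). The differences are minor: you spell out points the paper leaves implicit (that Gro-Tsen zero games form a union of $\eqi$-classes, closure under negation, and that $G \eg H$ iff $G-H$ is Gro-Tsen zero, so $\eg$ is exactly the congruence of the ideal), and you use the $\eqi$-versions of distributivity and associativity where the paper quotes the classical identities $(G+H)K \ceq GK+HK$ and $(GH)K \ceq G(HK)$, which changes nothing of substance.
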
 

 \begin{proof} 
(i) Suppose that  $G \eqi H$. Then
$GK \eqi HK$, for every game $K$, by Theorem \ref{ringth}. 
In particular,  $GK \ceq HK$, for every game $K$.
Thus $G \eg H$.

(ii) Just take $K=1$
in the definition of $\eg$.

(iii) is immediate from (i), (ii) and Lemma \ref{lemi}(b1).

(iv)  If both $G$ and $H$ are  Gro-Tsen zero, then $G +H$
is Gro-Tsen zero, since 
$(G+H)K  \ceq   GK + HK$, for all games $G$,  $H$, $K$.

If  $G$ is Gro-Tsen zero, then $GH$
is Gro-Tsen zero,  for every game $H$, since 
$(GH)K  \ceq   G(HK)$, for all games $G$,  $H$, $K$.

(v) is immediate from (i) and (iv).
\end{proof} 

\arxiv{
\begin{example} \labbel{3}   
The game $\{-3|\}$  is such that 
$\{-3|\} \cdot H$ is Conway zero, whenever $H$ 
 is either a number, or an impartial game.
Indeed, in the former case, 
$\{-3|\} \cdot H$ is Conway zero since 
$\{-3|\} $ is Conway zero and product is well-defined
for numbers, up to Conway equivalence.
On the other hand, if $H$ is impartial,
we check that $\{-3|\} \cdot H$ is Conway zero 
by induction on the birthday of $H$.
A typical option of $\{-3|\} \cdot H$ is
\begin{equation}\labbel{pirp}      
\{-3|\} \cdot H^L - 3 \cdot H + 3 \cdot H^L,
  \end{equation}
which is Conway equivalent to  
$\{-3|\} \cdot H^L - 3 \cdot (H -  H^L)$.
The first summand $\{-3|\} \cdot H^L$
is Conway zero, that is, Second winner, by
the inductive assumption.
The second summand is $(H -  H^L) + (H -  H^L) +(H -  H^L)$.
Since $H -  H^L$  is impartial, it is First winner, hence so is
the sum of three copies of $H -  H^L$. In conclusion,
\eqref{pirp} is First winner, so  that
$\{-3|\} \cdot H$ is Second winner (as we mentioned
in a comment in Problem \ref{probsi}(e), the product of an arbitrary game with an impartial game is impartial, hence $\{-3|\} \cdot H$ is impartial, so either
First or Second winner).

However, $\{-3|\}$ is not Gro-Tsen 0: if $H = \{0|0,*\}$, then a Left option in 
$\{-3|\} \cdot  H$ is $(-3) \cdot H$, which is Left winner, 
so $\{-3|\} \cdot H$ is not Second winner.

In the above example we can equivalently consider $\{-3|3\}$, or $\{|3\}$.

This example shows that any argument 
providing a characterization of Gro-Tsen relation 
should be rather delicate.
 \end{example}
}

Here's one more way that $\eg$ resembles $\eqi$:
\begin{proposition}
The equivalence relation $\eg$ is option-regular.
\end{proposition}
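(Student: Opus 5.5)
The plan is to prove directly from Definition \ref{grotsen} that if $G$ and $H$ satisfy the hypotheses of Definition \ref{genetic-def} with respect to $\eg$, then $GK \ceq HK$ for every game $K$. I will do this by transfinite induction on the birthday $b(K)$, keeping $G$ and $H$ fixed. The inductive hypothesis is that $GK' \ceq HK'$ for every $K'$ with $b(K') < b(K)$. The base case $K = 0$ is trivial, since $G0 = 0 = H0$ by Proposition \ref{basicn}(a8).

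For the inductive step I will show that $GK$ and $HK$ have pairwise $\ceq$-equivalent options, in the sense of Definition \ref{genetic-def}. I will then conclude $GK \ceq HK$ using that $\ceq$ is option-regular, which the paper lists among the well-known option-regular relations. Take a typical Left option of $GK$ of the form
\[
G^LK + GK^L - G^LK^L .
\]
By hypothesis there is a Left option $H^L$ of $H$ with $G^L \eg H^L$. I match the option above with the Left option
\[
H^LK + HK^L - H^LK^L
\]
of $HK$, using the \emph{same} $K^L$. Then I compare the three summands separately:
\begin{itemize}
\item $G^LK \ceq H^LK$ and $G^LK^L \ceq H^LK^L$ hold directly by the definition of $G^L \eg H^L$, applied to $K$ and to $K^L$;
\item $GK^L \ceq HK^L$ holds by the inductive hypothesis, since $b(K^L) < b(K)$.
\end{itemize}
Since $\ceq$ respects $+$ and $-$ (Conway's group structure), the two options are Conway equivalent.

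The other three types of Left option, and the four types of Right option, are handled in exactly the same way. In each case one replaces $G^L$ or $G^R$ by a matching option $H^L$ or $H^R$ of the same side, keeps $K^L$ or $K^R$ unchanged, and repeats the argument. The converse direction, matching each option of $HK$ with an option of $GK$, follows from the symmetric clauses of the hypothesis.

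The step needing most care is making sure the matching preserves the type of each option. The type of an option of $GK$ is decided by whether the $G$-component is Left or Right and whether the $K$-component is Left or Right. Since the hypothesis matches Left options with Left options and Right options with Right options, and $K^F$ is held fixed, every option of $GK$ gets a partner of the same type in $HK$. With this in place, the argument uses nothing about $\eg$ beyond its definition, the induction on $b(K)$, and the fact that $\ceq$ is a congruence for $+$ and $-$ and is option-regular.
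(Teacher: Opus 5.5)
Your proof is correct and takes essentially the same approach as the paper. Both argue by induction on $K$ and match each option $G^LK + GK^L - G^LK^L$ with $H^LK + HK^L - H^LK^L$ for some $H^L \eg G^L$. The outer summands are handled by the definition of $\eg$ and the middle summand by the inductive hypothesis, and the conclusion $GK \ceq HK$ then follows from the option-regularity of $\ceq$.
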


\begin{proof}
Suppose $G$ and $H$ are games such that for each left option $G^L$ of $G$ there
is a left option $H^L$ of $H$ such that $G^L\eg H^L$, and that the other
conditions of Definition~\ref{genetic-def} are similarly satisfied, and suppose
that $K$ is any game.  We wish to show that $GK\ceq HK$; in doing so we may
induct on the structure of $K$, and assume that the statement is true for all
options of $K$.

A left option of $GK$ has the form $G^L K + G K^L - G^L K^L$, where $G^L$ is a
left option of $G$ and $K^L$ is a left option of $K$.  Now, there is some left
option $H^L$ of $H$ with $H^L \eg G^L$.  Then $G^L K \ceq H^L K$ and $G^L K^L
\ceq H^L K^L$.  Also, $G K^L \ceq H K^L$ by the inductive hypothesis.  So, since
addition and negation respect $\ceq$, we conclude that
\[ G^L K + G K^L - G^L K^L \ceq H^L K + H K^L - H^L K^L; \]
the right-hand side is a left option of $HK$.  We may then apply the same logic
to the left options of $HK$, the right options of $GK$, and the left options of
$GK$.  Since $\ceq$ is well-known to be option-regular, we conclude that $GK\ceq HK$.
And since $GK\ceq HK$ for all $K$, we conclude that $G\eg K$.
\end{proof}

\begin{problems} \labbel{probs} 
  \begin{enumerate}[(a)]    
\item
Does $\eqi$ coincide with $\eg$?
\item
Study \emph{Gro-Tsen positive} games,
where   $G$ is defined to
be Gro-Tsen positive if $GH>0$ whenever $H>0$.
\item
Characterize    $\eg$ as the minimal (or maximal)
equivalence relation $\equiv$ on $\mathbf{PG}$ satisfying a certain set
of conditions. Compare items (c), (d) in Problem \ref{probsi}.
\item
More generally, are there further binary relations satisfying a 
significant set of the properties listed in  Problem \ref{probsi}(c), (d)?
\item
Define recursively $ G \leqa H$ if,
for every Left option $G^L$ of $G$, there is some
Left option $H^L$ of $H$ such that  $ G^L \leqa H^L$ and,
symmetrically, for every Right option $H^R$ of $H$, there is some
Right option $G^R$ of $G$ such that  $ G^R  \leqa H^R$.
Let $G \eqa H$ if both $ G^L \leqa H^L$ and
$ H^L \leqa G^L$. We expect that, in some natural
sense to be still precisely worked out,
$ \eqa $ is ``orthogonal'' to $  \eqi $ 
in $\mathbf{PG}$. 
Provide a precise meaning for the above vague 
intuition, or provide arguments suggesting
that it has little meaning.
\end{enumerate} 
\end{problems}  

\begin{proposition} \labbel{coarsrmk}   
Conway equivalence is the coarsest
relation on $\mathbf{PG}$ which respects addition
and refines the
equivalence relation of having the same outcome (positive,
negative, zero, or fuzzy).
\end{proposition}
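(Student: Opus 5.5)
The claim has two halves: Conway equivalence itself has both properties, and it is the largest relation that does. I will treat them in that order, taking ``relation'' to mean equivalence relation, as elsewhere in the paper.

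\emph{Conway equivalence has both properties.} That $\ceq$ respects addition is standard: if $G \ceq H$ and $K \ceq J$, then $G+K \ceq H+J$ (see \cite{C,S}). That $\ceq$ refines equality of outcome is immediate from the definition recalled in the Preliminaries. Two Conway equivalent games give the same outcome when any fixed game is added to both. Adding the fixed game $0$ shows that $G$ and $H$ have the same outcome.

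\emph{Maximality.} Let $\equiv$ be any equivalence relation on $\mathbf{PG}$ that respects $+$ and refines outcome. I must show that $G \equiv H$ implies $G \ceq H$. Fix an arbitrary game $X$. Since $X \equiv X$ by reflexivity, compatibility with addition gives $G + X \equiv H + X$. Since $\equiv$ refines outcome, $G+X$ and $H+X$ have the same outcome. As $X$ was arbitrary, this is exactly Conway's condition for $G \ceq H$.

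If I instead work with the characterization ``$G \ceq H$ iff $G - H$ is Second winner'', I take $X = -H$. This gives $G - H \equiv H - H$. The game $H-H$ is a Second win by the mirror strategy, so $G-H$ has outcome zero, and hence $G \ceq H$.

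The only potential obstacle is agreeing on what ``respects addition'' means when $\equiv$ is not assumed reflexive, or is only a binary relation. In that case I would read it as: $G \equiv H$ implies $G+X \equiv H+X$ for every game $X$. Under this reading the argument above goes through unchanged. Otherwise the proof is just unwinding definitions.
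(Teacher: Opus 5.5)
Your proposal is correct and is essentially the paper's proof: both take the properties of $\ceq$ as well known, then observe that $G \equiv H$ gives $G+K \equiv H+K$ and hence $o(G+K)=o(H+K)$ for every game $K$, which is exactly $G \ceq H$. Your side remarks only tidy up the same argument: adding $0$ to get refinement of outcome, the $X=-H$ variant, and reading ``respects addition'' as $G \equiv H \Rightarrow G+X \equiv H+X$ when reflexivity is not assumed.
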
 

\begin{proof}
Conway equivalence is well-known to have the above properties.
On the other hand, let $\equiv$  be any relation on $\mathbf{PG}$
and suppose that $G \equiv H$.
If $\equiv$ respects $+$, then  
$G + K \equiv H+K$, for every game $K$.
If $ \equiv $-related games have the same outcome, then
$o(G + K) = o( H+K)$.
Thus $G \ceq H$. This shows that 
$\ceq$ is coarser than $\equiv$. 
\end{proof}

In the next proposition we show that a similar remark  holds
for $\eg$.

\begin{proposition} \labbel{egcoar}
  \begin{enumerate}[(i)]    
\item   
  Gro-Tsen relation is the coarsest relation 
on $\mathbf{PG}$ which refines the
 relation of having the same outcome and 
respects both addition
and product. 
\item
Suppose that $G$ and $H$ are games. 
Then $G \eg H$ if and only if 
 $o(GK+L) = o(HK+L)$,
 for all games $K$, $L$.
 \end{enumerate} 
\end{proposition}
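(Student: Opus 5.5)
The plan is to prove (ii) first, since it is essentially a restatement of the definitions, and then obtain (i) by the argument already used for Proposition \ref{coarsrmk}, with one extra multiplication step.

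For (ii), recall that $A \ceq B$ holds exactly when $o(A+L)=o(B+L)$ for every game $L$, since $\ceq$ respects $+$ and Conway equivalent games have the same outcome. So $G \eg H$, i.e.\ $GK \ceq HK$ for all $K$, is equivalent to $o(GK+L)=o(HK+L)$ for all $K,L$.

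For (i), I would first check that $\eg$ has the three required properties.
\begin{enumerate}[(1)]
\item Refining outcome: take $K=1$ and use Proposition \ref{basicn}(a9) to get $G \ceq H$, hence $o(G)=o(H)$.
\item Respecting addition: suppose $G \eg H$ and $K \eg J$, and let $X$ be arbitrary. Using distributivity up to $\ceq$ \cite[Theorem 7]{C} and the fact that $\ceq$ respects $+$, we get $(G+K)X \ceq GX+KX \ceq HX+JX \ceq (H+J)X$.
\item Respecting product: it suffices to show that $G \eg H$ implies $GK \eg HK$ for every $K$; commutativity (a10) then handles the other factor, and transitivity combines the two steps. For arbitrary $X$, associativity up to $\ceq$ \cite[p.~19]{C} gives $(GK)X \ceq G(KX) \ceq H(KX) \ceq (HK)X$. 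The middle step is just the definition of $\eg$ applied with the game $KX$.
\end{enumerate}
Alternatively, (2) and (3) can be read off from Corollary \ref{finer}(v), but the direct computation above is cleaner.

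Next I would show that $\eg$ is the coarsest such relation. Let $\equiv$ be any relation that respects $+$ and $\cdot$ and refines having the same outcome, and suppose $G \equiv H$. I will assume, as in Proposition \ref{coarsrmk}, that ``respects'' means that $G\equiv H$ implies $GK \equiv HK$ and $A+L\equiv B+L$; reflexivity on the other argument is implicit here. Then for all games $K,L$ we get $GK \equiv HK$, hence $GK+L \equiv HK+L$, hence $o(GK+L)=o(HK+L)$. By (ii) this gives $G \eg H$, so $\equiv$ is finer than $\eg$.

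I expect the main obstacle to be the respect-of-product step (3). We must not use that $\ceq$ respects multiplication, since it does not (Remark \ref{notC}). The argument has to rely only on associativity and distributivity up to $\ceq$, which hold for arbitrary games, together with the quantification over all $K$ in the definition of $\eg$.
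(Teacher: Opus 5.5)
Your proof is correct and is essentially the paper's argument. Part (ii) is the definition of $\ceq$ applied to $GK$ and $HK$. The maximality half of (i) is the argument of Proposition~\ref{coarsrmk}: you pass from $GK\equiv HK$ to $GK+L\equiv HK+L$ and invoke (ii), whereas the paper first shows that $\equiv$ refines $\ceq$.

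The one difference is that you check directly, from distributivity and associativity up to $\ceq$, that $\eg$ respects $+$ and $\cdot$. The paper instead cites Corollary~\ref{finer}(v), which uses the same identities but routes them through the ring $\mathbf{PG}/{\eqi}$, and hence through Theorem~\ref{thmi}. Your version is more self-contained.
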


  \begin{proof}
(i) By Corollary \ref{finer}(v), $\eg$ respects both $+$
and $\cdot$; moreover, $\eg$ preserves outcomes, since
it is finer that $\ceq$ which itself preserves outcomes. 
 Conversely, suppose that some 
relation $\equiv$ respects both $+$ and $\cdot$
and preserves outcomes. By Proposition  \ref{coarsrmk},
 $\equiv$ is finer than $\ceq$.  
By assumption, if $G \equiv H$, then 
$GK \equiv HK$, hence $GK \ceq HK$,
by the above sentence. Thus 
$G \eg H$ by the definition of $\eg$.
This means that $\eg$ is coarser than $\equiv $.  
 
(ii) If  we keep $K$ arbitrary but fixed,
the condition reads
 $o(GK+L) = o(HK+L)$, for every $L$.
This means exactly
 $GK \ceq HK$, by the definition of 
 $\ceq$. 
\end{proof}

Just as one has $\ceq$ for addition, and $\eg$ for addition and multiplication together,
one could ask about analogous equivalence
relations for other sets of operations, such as
Conway's alternate addition $G:H$ \cite{C}, or Norton's alternate
multiplication \cite{Ke}.

\begin{definition} \labbel{setofop}  
Given a set $S$ of operations on $\mathbf{PG}$, each of which respects $\cong$,
one may define $\equiv_S$ to be the coarsest equivalence relation such that each
operation in $S$ respects $\equiv_S$ and such that $\equiv_S$ refines the
equivalence relation that is having the same comparison to zero (positive,
negative, zero, or fuzzy).
\end{definition}

We always want our equivalence relations to refine the relation of having the
same comparison to zero, because a game's comparison to zero determines which
player has a winning strategy, and it would not make sense to group together
games for which this differs.  With this definition, we obtain $\ceq$ when
$S=\{+\}$, and obtain $\eg$ when $S=\{\cdot,+,-\}$.  If $S=\{:\}$, then in that
case we simply obtain the relation of having the same comparison to zero, as $:$
respects this relation; however, one might obtain a more interesting equivalence
with $S=\{+,:\}$.

So far, we have not yet  proved that $\equiv_S$ always exists. 
Were games not a proper class, one could use a simple
 abstract proof based on taking the transitive closure of 
the union of all the relations that satisfy our
conditions (at least one such relation exists, isomorphism of
games).
Instead, we are going to give an explicit description of  
$\equiv_S$ which does not give rise to set-theoretic
problems.
Recall that, given a set $S$ of operations
(formally, operation symbols), a \emph{term}
appropriate for $S$ 
is  an expression which can be constructed using 
the operations and a set of variables. 

\begin{proposition} \labbel{equiSex} 
Given a set $S$ of operations on $\mathbf{PG}$, 
 $\equiv_S$ is defined by the following condition:
$G \equiv_S H$ if $o(t(G, K_1, \dots, K_n)) = o(t(H, K_1, \dots, K_n))$ 
 for every term $t$ appropriate for $S$  and games 
$K_1, \dots, K_n$. 
 \end{proposition}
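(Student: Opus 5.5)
Write $\approx$ for the relation defined in the statement: $G \approx H$ iff $o(t(G,K_1,\dots,K_n)) = o(t(H,K_1,\dots,K_n))$ for every term $t$ appropriate for $S$ and all games $K_1,\dots,K_n$. Note that $t$ may contain several occurrences of the distinguished variable. The plan is to show three things: $\approx$ is an equivalence relation refining outcome; every operation in $S$ respects $\approx$; and every equivalence relation $\equiv$ with these two properties is finer than $\approx$. Together these say that $\approx$ is the coarsest such relation, so $\equiv_S$ exists and equals $\approx$. This also settles the existence question raised before the statement, since $\approx$ is defined by a formula quantifying over sets only (terms and tuples of games), with no quantification over classes of relations.

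\emph{Step 1.} Reflexivity, symmetry and transitivity of $\approx$ are immediate, because the defining condition is an equality of outcomes for each fixed $t$ and $K_1,\dots,K_n$. Taking $t$ to be the bare variable $x$ gives $o(G)=o(H)$, so $\approx$ refines the relation of having the same outcome.

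\emph{Step 2.} Let $f\in S$ be $m$-ary and suppose $G_i \approx H_i$ for $i=1,\dots,m$. It is enough to change one argument at a time and then use transitivity. So assume $G \approx H$ and compare $f(\dots,G,\dots)$ with $f(\dots,H,\dots)$, the other arguments $A_j$ being fixed. Given a term $t(x,\bar y)$ and games $\bar K$, form the term $t'(x,\bar z,\bar y) = t(f(z_1,\dots,x,\dots,z_{m-1}),\bar y)$. It is appropriate for $S$ because $f\in S$. Plugging the $A_j$ in for $\bar z$, the hypothesis $G\approx H$ applied to $t'$ gives exactly
\[
o(t(f(\dots,G,\dots),\bar K)) = o(t(f(\dots,H,\dots),\bar K)).
\]
Hence $f(\dots,G,\dots)\approx f(\dots,H,\dots)$. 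The same substitution argument handles unary operations such as $-$. Since each operation respects $\cong$, terms are well defined on forms, so there are no extensionality issues.

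\emph{Step 3.} Let $\equiv$ respect every operation in $S$ and refine outcome, and suppose $G\equiv H$. By induction on the construction of terms, $t(G,\bar K)\equiv t(H,\bar K)$ for every $t$. For a variable $y_i$ both sides are $K_i$ and we use reflexivity; for $x$ it is the hypothesis; for a compound term we use that $f$ respects $\equiv$ together with the inductive hypothesis on the subterms. Since $\equiv$ refines outcome, $o(t(G,\bar K))=o(t(H,\bar K))$, which means $G\approx H$.

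The only point needing care is Step 2: it must be checked that substituting a compound subterm for a variable again yields a legitimate term. The proof works because terms are closed under this substitution and the parameters $\bar K$ are arbitrary. This mirrors the argument of Proposition \ref{coarsrmk} and Proposition \ref{egcoar}(ii).
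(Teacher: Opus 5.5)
Your proposal is correct and follows essentially the same route as the paper. You substitute $f(\dots,x,\dots)$ into a term to show the operations respect the relation, use the bare-variable term $t(x)=x$ to get refinement of outcomes, and give the argument of Proposition \ref{coarsrmk} (spelled out as an induction on terms) for maximality, with the paper leaving implicit details you make explicit, such as the equivalence-relation check and the case of arbitrary arity.
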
 

 \begin{proof}  
Arguing in 
a way similar to the proof of Proposition  \ref{coarsrmk}, 
we see that the condition has to be satisfied.
On the other hand, we are going to see that $\equiv_S$, as defined
in the statement of the proposition, respects each 
operation in $S$. Let us deal with, say, a binary
operation $*$.   Assume that $G \equiv_S H$.
To prove $G*K \equiv_S H*K$, we have to show that 
$o(t(G*K, K_1, \dots, K_n)) = o(t(H*K, K_1, \dots, K_n))$
for every term $t$ and games 
$K_1, \dots, K_n$. The expression 
$t(G*K, K_1, \dots, K_n)$ can be written as 
$u(G,K, K_1, \dots, K_n)$, for some other term $u$.   
Since $G \equiv_S H$, we get $o(u(G,K, K_1, \dots, K_n))=
o(u(H,K, K_1, \dots, K_n))$, that is,
 $o(t(G*K, K_1, \dots, K_n)) = o(t(H*K, K_1, \dots, K_n))$,
what we had to show.  Straightforwardly $\equiv_S$
 refines the
 relation of having the same outcome, just consider the
unary term $t(x)=x$.   
\end{proof}

Of course, in  many specific instances, a simple set of terms
might be enough. For example, as in the case of Conway sum,
if $S= \{ * \} $ and $*$ is a binary commutative and associative
operation, then  $G \equiv_S H$ if and only if 
$o(G*K) = o(H *K)$, for every game $K$.  
Another example in which a simple set of terms
works is given by Proposition \ref{egcoar}(ii). 

\begin{problem}
Can one characterize $\equiv_S$ when $S=\{+,:\}$?  When $S=\{+,-,\cdot,:\}$?
When $S=\{:,-,*\}$, where $*$ represents Norton's multiplication, and $-$
represents negation rather than subtraction?  When $S=\{+,:,-,*\}$?  When
$S=\{+,-,\cdot,:,*\}$?
\end{problem}

\begin{remark}
In the cases considered so far, $\equiv_S$ has always been option-regular.  It
seems like this generally ought to be the case for reasonable sets of
operations, for reasons similar to those that $\eg$ is option-regular, but we
will not attempt to formalize this here.
\end{remark}

\begin{problem} \labbel{probaltrpr}  
In order to make the class of combinatorial games a Ring,
one could take another approach: retain the notion of Conway equivalence as
the ``correct'' idea of equality between games, but modify
the definition of Conway product (which does not pass to the Conway quotient).
\arxiv{Recall that the definition of the Conway product is motivated by the fact that 
in an ordered integral Domain (as indeed the surreals turn out to be
with the definition), if $a>a'$ and $b>b'$, then $(a-a')(b-b') >0$,
hence   $a'b + ab' - a'b' < ab$, so that, \emph{for numbers},
it makes sense to take $G^LH + GH^L-G^LH^L$ as a 
Left option of $GH$, and similarly for the other options in 
Conway definition of the product.    

However, the above argument is less motivated for arbitrary games which
are not necessarily numbers.  
For example,
 we might propose to take 
$G^LH + GH^L-G^LH^L$ as a 
Left option of some possible definition $G \odot H$
of a product only in case when

(*) $o(G-G^L) = o(H-H^L)$, where $o$ denotes the outcome of a game.
and symmetrically for the other possible options.
 
For example, in (*) we consider, say, $G^LH + GH^R-G^LH^R$
as a Right option of $G \odot H$ only if
$o(G-G^L) = o(H^R-H)$.   

We do not know whether the above possibility leads
to a well-defined Operation modulo Conway equivalence,
making $\mathbf{PG} / \ceq $ a Ring.
Even were this  the case, we expect such an operation  to be trivial
in many significant situations. }
In any case, Remarks \ref{notint}(a)(b), which apply
in  a very general setting, provide severe limitations
to the success of the project.

\arxiv{As another possible variation, if, in an ordered integral Domain,
$a>a'$, $b>b'$, $a>a'_1$, $b>b'_1$,  $a>a'_2$ and $b<b'_2$,
then $a'b + ab' - a'b' < ab$, $a'_1b + ab'_1 - a'_1b'_1 < ab$
and $-a'_2b - ab'_2 + a'_2b'_2 < - ab$, so that 
$a'b + ab' - a'b'+a'_1b + ab'_1 - a'_1b'_1 -a'_2b - ab'_2 + a'_2b'_2 
< ab+ab-ab= ab$. 
So that Conway argument could suggest to consider, say, also 
$G^LH + GH^L-G^LH^L+
G^{L_1}H + GH^{L_1}-G^{L_1}H^{L_1}
-G^{L_2}H - GH^{R_2}+G^{L_2}H^{R_2}$ as a 
Left option of $GH$ 
(this does not change the definition of the product when restricted
to numbers, but would change the definition for arbitrary games.) }
 
The main problem seems to be that Conway sum has a very
natural game-theoretical interpretation, while no such natural interpretation 
seems to be known for the Conway product. 
One should either find a  game-theoretical interpretation
for the product, or modify the definition, if possible,
in order to get such a game-theoretical interpretation.
\end{problem}

 \section{Two remarks about Sets as Games} \labbel{setsas} 

Just as  games can be considered  ``bilateral'' sets \cite{C,Co},   
 \emph{sets} can be considered as  games in which  Right has no option
in  each subposition, including the improper subposition.
Of course, in any reasonable Theory of sets, there is a definable
Bijection between ``sets''  in the classical set-theoretical sense, and
``sets''  in the above game-theoretical sense;
this is the Correspondence  given by  $S \mapsto \{\,  s \mid \,  \} $,
where $s$ varies among the elements of 
the ``classical'' set  $S$.
(A set could also be considered exactly as an impartial game,
but $\eqi$ has different effects on sets considered in that way.)

\begin{proposition} \labbel{propsets} 
  \begin{enumerate}[(i)]   
 \item    
Suppose that $G$ and $H$ are sets in the above sense of 
Right-subposition-free games. 
Then $G \eqi H$ if and only if $G$ and $H$ are isomorphic 
(=extensionally equivalent). 
\item
In particular, the Semigroup of Sets
with Conway addition can be extended to a Ring $\mathbf{RSets}$,
 the SubRing of     $\mathbf{PG} / {\eqi}$
generated by the Class of Sets\footnote{Formally,
 by $\eqi$-equivalence classes of sets, but we
have just showed in (i) that each equivalence class contains just one 
element.}.
\end{enumerate}
 \end{proposition}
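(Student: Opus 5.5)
The plan is to prove (i) by induction on the natural sum $b(G)\oplus b(H)$ (or on $b(G-H)$), and then obtain (ii) as a direct consequence.

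\emph{The easy direction of (i).} If $G\cong H$, then $G\eqi H$. This holds because $\eqi$ is reflexive (Lemma \ref{lemi}(b4)) and the definition of iteratively zero is invariant under $\cong$.

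\emph{The converse of (i).} Assume $G\eqi H$, that is, $G-H$ is iteratively zero. In $G-H$, Left's options all have the form $G^L-H$, since $-H$ has no Left options. Right's options all have the form $G-H^L$, since $G$ has no Right options.

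\begin{itemize}
\item Let First play $G^L-H$, moving as Left. Second must answer as Right with an iteratively zero option. The Right options of $G^L-H$ are only of the form $G^L-H^{L'}$, because $G^L$ is Right-option free. So there is a Left option $H^{L'}$ of $H$ with $G^L-H^{L'}$ iteratively zero, i.e.\ $G^L\eqi H^{L'}$. Since $G^L$ and $H^{L'}$ are again sets, the inductive hypothesis gives $G^L\cong H^{L'}$.
\item Symmetrically, let First play $G-H^L$, moving as Right. Second's reply as Left must be of the form $G^{L'}-H^L$, since $-H^L$ has no Left options. This yields $G^{L'}\eqi H^L$, hence $G^{L'}\cong H^L$ by induction.
\end{itemize}

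Thus every element of $G$ is isomorphic to some element of $H$, and conversely. There are no Right options to match, so by extensionality $G\cong H$.

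The main point needing care is checking that Second's reply is forced into the other component. This follows because Right-subposition-free games give Left moves only in the $G$-part and Right moves only in the $-H$-part. The induction is well founded because both subpositions have smaller birthdays. The base case $G=H=0$ is trivial.

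\emph{Part (ii).} By (i), the map sending a set to its $\eqi$-class is injective on isomorphism classes of sets. Sets are closed under Conway addition, since a sum of Right-option-free games is Right-option-free. Hence the semigroup of sets embeds into the additive structure of the Ring $\mathbf{PG}/{\eqi}$ given by Theorem \ref{ringth}. We then take $\mathbf{RSets}$ to be the subring generated by the image; this is automatically a Ring extending the semigroup.
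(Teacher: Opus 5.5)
Your proof is correct and is essentially the paper's argument. In $G-H$, Second's reply is forced into the other component, so Left options are matched up to $\cong$ by induction, and (ii) follows from closure of sets under $+$ together with Theorem \ref{ringth}. The only cosmetic differences are that the paper inducts on $b(G)$ rather than on the natural sum of the birthdays, and leaves the easy direction implicit.
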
  

\begin{proof}
Item (i) is proved by
transfinite induction on the birthday of $G$.
If $G$ and $H$ are sets and $G-H$ 
is iteratively zero, then, for every Left option $G^L$,
Right has an option (necessarily on $-H$, since $G$ is a set) 
such that $G^L +(-H)^R$ is iteratively zero.
But $(-H)^R$ is of the form $-(H^L)$, so by the inductive
assumption
$G^L$ and    $H^L$ are isomorphic. Letting  
$G^L$ vary among all Left options of $G$, 
we get that $G \subseteq H$, as sets.
By the symmetrical  argument, letting Right move first on
$-H$, we get $G \cong H$.
 
(ii) If  $G$ and $H$ are sets, then $G+H$
is a set, since Right has no option in any subposition
of $G+H$. Everything else follows from
(i) and the fact that $\mathbf{PG} / {\eqi}$ is
a Ring, by Theorem \ref{ringth}.
\end{proof}

\begin{remark} \labbel{rmksets}    
(a) In connection with 
Proposition \ref{propsets}(i), 
sets behave in a radically different  way with respect to 
$\ceq$.  
Indeed, any set is Conway equivalent to its
 Von Neumann rank, thus the quotient of the class of
Sets modulo Conway equivalence is isomorphic to the class of
the ordinals.

By the way, the above  example shows that $\eqi$ 
is a much finer relation than $\ceq$.

(b) It is not completely clear which members of 
$\mathbf{PG} / {\eqi}$ are exactly necessary to get
$\mathbf{RSets}$ from Proposition \ref{propsets}(ii). Of course,
a SubClass of $\mathbf{Numb}/ {\eqi}$ from
Proposition  \ref{inumbsur} 
 is enough, since every set is a number
(actually, as we mentioned, Conway equivalent to an ordinal).

(c) Problem: is $\mathbf{RSets}$ an Integral Domain?
If the answer is affirmative, describe its Field of quotients.

(d) In passing, note that various different operations on Sets have been studied;
see the introduction of \cite{Ki} for some references.

(e) As we mentioned, it is straightforward that the
 Class of Sets is closed under Conway sum;
on the other hand, it is not closed under Conway product, not even up
$\eqi$-equivalence (on the other hand, it is closed under product up to Conway 
equivalence). See Proposition \ref{notclo} below.
 
Thus the product of two sets is not necessarily a set;
but in any case is defined in $\mathbf{Numb}/ {\eqi}$,
by Proposition \ref{propsets}(ii). Curiously, a parallel situation occurs
for exponentiation: exponentiation is a well-defined operation
on surreals \cite{Go}, but  ordinals are not closed under
such an operation; actually, there is no  ``natural''  
exponentiation operation on the ordinals satisfying
appropriate algebraic properties \cite{A}. 
\end{remark}

\begin{proposition} \labbel{notclo}   
The Class of Sets is  not closed under Conway product up to
$\eqi$-equivalence.
\end{proposition}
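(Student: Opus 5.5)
The plan is to produce two specific small sets $G$, $H$ and show that $GH$ is not $\eqi$-equivalent to any set. Ordinals are probably the wrong place to look. For instance, $2\cdot 2 = \{\, 2+2-1 \mid \,\}$, and a quick check suggests $2+2-1 \eqi 3$: after Right's move on $-1$, Left's forced reply leads to $1+2 \cong 3$. Then Proposition \ref{gen} would give $2\cdot 2 \eqi \{\,3\mid\,\}$, which is a set. So I would use a non-ordinal set with redundant options, namely $2^\circ=\{\,0,1\mid\,\}$ from Example \ref{exic}(a), and take $G = 2^\circ$ with $H = 2$ or $H = 2^\circ$.

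\textbf{Step 1: compute the options.} Since $G$ and $H$ have no Right options, $GH$ has no Right options. Its Left options are $G^LH + GH^L - G^LH^L$. Using Proposition \ref{basicn}(a8)(a9), the options with $G^L=0$ or $H^L=0$ simplify to things like $G$, $H$ or $GH^L$. The genuinely new options are those with $G^L=1$ and $H^L=1$, for example $2^\circ + H - 1$. Such an option is not a set, because of the $-1$ summand.

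\textbf{Step 2: the structural lemma.} I will prove that if $X$ has no Right options and $X \eqi S$ with $S$ a set, then every Left option $X^L$ is $\eqi$-equivalent to some Left option of $S$, and conversely. The argument follows Proposition \ref{propsets}(i). In $X - S$, Left moves to $X^L - S$, and Right's iteratively zero reply must be on some component. If it is on $-S$, it has the form $X^L - S^L$. If instead Right moves inside $X^L$, a separate argument is needed; I would handle that by playing in $-S$ next, using the non-alternating moves allowed by Definition \ref{iter}.

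\textbf{Step 3: derive the contradiction.} Consider an option like $Y = 2^\circ + 2 - 1$. If Right moves first on the $-1$, Left must answer inside $2^\circ + 2$, and the two answers $2^\circ$ and $0+2$ are not isomorphic. Combining this with Proposition \ref{propsets}(i), I aim to show that $Y$ is not $\eqi$ to any set $T$ by checking both orders of play in $Y - T$. Then the Left options of $GH$ up to $\eqi$ cannot all be sets, which contradicts the structural lemma of Step 2 if $GH \eqi S$.

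\textbf{Main obstacle.} I expect the hardest part to be the case analysis in Step 3 (and the second case of Step 2), where Second's reply may be made in a different component. If $2^\circ\cdot 2$ turns out to be $\eqi$ a set after all, I would switch to $2^\circ\cdot 2^\circ$ or to $\{\,0,2\mid\,\}\cdot 2$. The goal is to find an example where the asymmetry of redundant options forces a Left option with an irreducible Right move, such as the $-1$ in $Y$.
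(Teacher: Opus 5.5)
There are two genuine gaps: your main example is in fact $\eqi$ a set, and the forward half of your Step 2 lemma is false.

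\textbf{The example.} Your main example cannot work, and neither can the fallback $\{\,0,2\mid\,\}\cdot 2$. Since $2\cong 1+1$, Lemma \ref{lemip} together with Proposition \ref{basicn}(a9)(a10) gives $G\cdot 2\eqi G+G$ for every game $G$; the paper notes this right after the lemma. And $G+G$ is a set whenever $G$ is.

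Concretely, your Step 3 claim that $Y=2^\circ+2-1$ is not $\eqi$ to a set is false. The game $2-1-1$ is iteratively zero: a Left move $2\to 1$ and a Right move on either $-1$ answer each other, leaving $1-1$. So $2-1\eqi 1$, and $Y\eqi 2^\circ+1$ by Proposition \ref{eqeqi}(iii). Proposition \ref{gen} then gives $2^\circ\cdot 2=\{\,2^\circ,\,2+2^\circ-1\mid\,\}\eqi\{\,2^\circ,\,1+2^\circ\mid\,\}\cong 2^\circ+2^\circ$. Of your candidates only $2^\circ\cdot 2^\circ=\{\,0,\,2^\circ,\,2^\circ+2^\circ-1\mid\,\}$ survives, and that is the paper's example.

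\textbf{The structural lemma.} Even with the right example, you cannot reduce to ``every Left option of $GH$ is $\eqi$ to a Left option of the set''. Take $X=\{\,\{1\mid 1\},\,0\mid\,\}$ and $S=1$. In $X-1$:
Left's move to $\{1\mid 1\}-1$ is answered by $1-1$;
Left's move to $0-1$ is answered by Right moving to $0$;
Right's move to $X+0$ is answered by Left moving to $0$.
So $X\eqi 1$. Yet $\{1\mid 1\}$ is not $\eqi 0$, and $0$ is the only Left option of $1$.

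When Right answers inside $X^L$, you only learn $X^{LR}\eqi S$, hence $X\eqi X^{LR}$. No play in $-S$ turns this into $X^L\eqi S^L$, so this case must be refuted by hand.

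For $2^\circ\cdot 2^\circ$, this is exactly the paper's first step. Suppose Right answers $(2^\circ+2^\circ-1)-S$ by removing the $-1$. Then $(2^\circ+2^\circ)-S$ is iteratively zero, so $S\cong 2^\circ+2^\circ$ by Proposition \ref{propsets}(i). But $2^\circ\cdot 2^\circ\not\eqi 2^\circ+2^\circ$: Left moving $2^\circ\cdot 2^\circ$ to $0$ leaves $-(2^\circ+2^\circ)$, and Right cannot turn that into a Conway zero game.

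Only then does the remaining case reduce to showing that $2^\circ+2^\circ-1\eqi T$ is impossible for a set $T$. The paper lets Left move one $2^\circ$ to $0$. Right cannot answer on $-1$: that would require $T\cong 2^\circ$, impossible since $T\ceq 3$. So Right answers in $-T$, which forces a set $T^L\ceq 1$, i.e.\ $T^L\cong 1$. But $2^\circ-1-1$ is not iteratively zero, since Left can move $2^\circ$ to $0$.
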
  

\begin{proof} 
 Consider the product $2^\circ \cdot 2^\circ =
\{ \,  0, 2^\circ, 2^\circ+2^\circ-1 \mid \, \} $.
First, we check that $2^\circ \cdot 2^\circ$ and
$2^\circ + 2^\circ = \{ \,   2^\circ, 1+2^\circ \mid \, \}$
 are not $\eqi$-equivalent. Otherwise,
$(2^\circ \cdot 2^\circ) - (2^\circ + 2^\circ)$ 
would be iteratively zero. But if Left
makes the move $0$ on the first summand, Right has no possible
reply to make the sum iteratively zero (actually, not even
Conway zero). 

We now can check that
there is no set which is $\eqi$-equivalent to
 $2^\circ \cdot 2^\circ$.
Otherwise, call such a set $S$. 
Then $(2^\circ \cdot 2^\circ) - S$ would be iteratively zero,
thus, if Left as First moves to    $(2^\circ +  2^\circ -1) - S$,
Right would have a move  turning the sum
into an iteratively zero game. 
Right moving on the first component (removing $-1$)
is not apt, since this would imply that 
$(2^\circ +  2^\circ) - S$ is iteratively zero, that is
$2^\circ +  2^\circ \eqi S$. But then $S$ would be simultaneously
$\eqi$-equivalent to $2^\circ \cdot 2^\circ$ and 
to $2^\circ + 2^\circ$. By transitivity, such games
would be $\eqi$-equivalent, but we have seen in the previous
paragraph that this is not the case.

Thus Right needs a move on $-S$, making  
$(2^\circ +  2^\circ -1) + (- S)^R$ iteratively zero.
Changing sign, this means that 
 some Left move  $S^L$ is $\eqi$-equivalent to
$2^\circ +  2^\circ -1$. Note that 
$S^L$ cannot be $2^\circ$, since otherwise,
by Lemma \ref{lemi}(b4), (b5),  
$2^\circ +  2^\circ -1 - 2^\circ 
\eqi 2^\circ -1$ would be iteratively zero,
which is not the case, since $ 2^\circ -1$
is Left winner. 
Now, if Left moves on
$(2^\circ +  2^\circ -1) - S^L$ choosing
$(2^\circ -1) - S^L$, Right cannot reply
with $2^\circ - S^L$, since we have just showed
that $S^L$ cannot be $2^\circ$
(thus $2^\circ - S^L$ is not iteratively zero, since
otherwise $2^\circ \eqi S^L$, but this would mean
that the two games are extensionally equivalent, 
by (a)).
Thus necessarily $(2^\circ -1) - S^{LL}$
is iteratively zero, for some option $S^{LL}$.
Since  $2^\circ -1$ is  Conway equivalent to $1$,
$\eqi$ is finer than $\ceq$ and $1$ is the only set
with Conway value $1$, necessarily   $S^{LL} $
is $1$. But this is impossible,
since Left can move on  $(2^\circ -1) - S^{LL}=
(2^\circ -1) - 1$ to $-1-1$ and Right cannot turn
 $-1-1$ to $0$ in a single move. 
  
We have reached a contradiction assuming that
 $2^\circ \cdot 2^\circ$ is $\eqi$-equivalent to
some set, hence we have proved that
the Class of Sets is  not closed under Conway product, up
to $\eqi$-equivalence.
\end{proof}

\begin{remark} \labbel{mad}
As a quite marginal remark, considering sets as games, 
as we have done in this section, might be perceived 
as  unnatural. In fact,  the two notions
have been developed in very different contexts and with
very different purposes. However, considering sets as
games has sometimes advantages.

Recall that E. Zermelo defined (or interpreted)
natural numbers as the sets $ \emptyset $, $ \{ \emptyset  \} $,
 $ \{ \{ \emptyset  \} \} $, $ \{ \{ \{ \emptyset  \} \}  \} $, 
\dots, while Von Neumann considered
the sets $ \emptyset $, $ \{ \emptyset  \} $,
 $ \{ \,  \emptyset, \{ \emptyset  \}  \, \}  $, 
 $ \{ \,  \emptyset,   \{ \,  \emptyset, \{ \emptyset  \}  \, \} \, \} $, \dots, instead.
On the basis of the above contrasting definitions,
 P. Benacerraf \cite{B}  argued that
natural numbers are not sets, actually, 
that ``there are no such things as numbers''. See also \cite[p. 85]{Ma}.

 Here game theory comes to the rescue!
Indeed, corresponding Zermelo and  Von Neumann's numbers are Conway
equivalent, so that no trouble arises if we consider natural numbers
as games, that is, as Conway equivalence classes of sets.
In passing, note that $2$, as defined in Example \ref{exic}(a) is Zermelo's
number, while   $2^\circ$ is Von Neumann's. Note also that the same
arguments as in Example \ref{exic} show that $2$ and $2^\circ$
are not $\eqi$-equivalent.   
 \end{remark}

\end{document}